\numberwithin{equation}{section}
\numberwithin{equation}{section}
\newcommand{\nm}{\noalign{\smallskip}}
\newcommand{\beq}{\begin{equation}}
\newcommand{\eeq}{\end{equation}}
\newcommand{\eqnref}[1]{(\ref {#1})}
\newcommand{\ds}{\displaystyle}
\newcommand{\p}{\partial}
\newcommand{\pd}[2]{\frac {\p #1}{\p #2}}
\newtheorem{theorem}{Theorem}[section]
\newtheorem{cor}[theorem]{Corollary}
\newtheorem{lemma}[theorem]{Lemma}
\newtheorem{prop}[theorem]{Proposition}
\newtheorem{remark}{Remark}
\newtheorem{asump}{Assumption}[section]
\newcommand{\R}{\mathbb{R}}
\newcommand{\f}{\frac}
\newcommand{\Bx}{{x}}
\newcommand{\By}{{y}}
\newcommand{\RR}{\mathbb{R}}
\newcommand{\Scal}{\mathcal{S}}
\newcommand{\Kcal}{\mathcal{K}}
\newcommand{\Acal}{\mathcal{A}}
\title{Double-negative acoustic metamaterials\thanks{\footnotesize The work of Hyundae Lee was supported by National Research Fund of Korea (NRF-2015R1D1A1A01059357, NRF-2017R1A4A1014735). The work of Hai Zhang was partially supported by Research Grant Council of Hong Kong (GRF grant 16304517)  and 
startup fund R9355 from HKUST.}}
\author{
Habib Ammari\thanks{\footnotesize Department of Mathematics, 
ETH Z\"urich, 
R\"amistrasse 101, CH-8092 Z\"urich, Switzerland (habib.ammari@math.ethz.ch, brian.fitzpatrick@sam.math.ethz.ch, sanghyeon.yu@sam.math.ethz.ch).} \and Brian Fitzpatrick\footnotemark[2] \and  Hyundae Lee\thanks{\footnotesize  Department of Mathematics, Inha University,  253 Yonghyun-dong Nam-gu,  Incheon 402-751,  Korea (hdlee@inha.ac.kr).}  \and Sanghyeon Yu\footnotemark[2] \and Hai Zhang\thanks{\footnotesize 
Department of Mathematics, 
 HKUST,  Clear Water Bay, Kowloon, Hong Kong (haizhang@ust.hk).}}
\date{} 
\begin{document}
\maketitle
\begin{abstract}
The aim of this paper is to provide a mathematical theory for understanding the mechanism behind the double-negative refractive index phenomenon in bubbly fluids. The design of double-negative metamaterials generally requires the use of two different kinds of subwavelength resonators, which may limit the applicability of double-negative metamaterials. Herein we rely on media that consists of only a single type of resonant element, and show how to turn the acoustic metamaterial with a single negative effective property obtained in [H. Ammari and H. Zhang, Effective medium theory for acoustic waves in bubbly fluids near Minnaert resonant frequency. SIAM J. Math. Anal., 49 (2017), 3252--3276.] into a negative refractive index metamaterial, which refracts waves negatively, hence acting as a superlens. Using bubble dimers made of two identical bubbles, it is proved that both the effective mass density and the bulk modulus of the bubbly fluid can be negative near the anti-resonance of the two hybridized Minnaert resonances for a single constituent bubble dimer. 
A rigorous justification of the Minnaert resonance hybridization, in the case of a  bubble dimer in a homogeneous medium, is established. The acoustic properties of a single bubble dimer are analyzed. Asymptotic formulas for the two hybridized Minnaert resonances are derived. Moreover, it is proved that the bubble dimer can be approximated by a point scatterer with monopole and dipole modes. For an appropriate volume fraction of bubble dimers with certain conditions on their configuration, a double-negative effective medium when the frequency is near the anti-resonance of the hybridized Minnaert resonances can be obtained.  

\end{abstract}

\def\keywords2{\vspace{.5em}{\textbf{  Mathematics Subject Classification
(MSC2000).}~\,\relax}}
\def\endkeywords2{\par}
\keywords2{35R30, 35C20.}

\def\keywords{\vspace{.5em}{\textbf{ Keywords.}~\,\relax}}
\def\endkeywords{\par}
\keywords{bubble, Minnaert resonance, hybridization, homogenization, double-negative metamaterial.}

\section{Introduction}

Metamaterials are man-made composite media structured on a scale much smaller than a wavelength. They raise the possibility of an unprecedented level of control when it comes to engineering the propagation of waves. A particularly interesting capability is the prospect of focusing or imaging beyond the diffraction limit. Metamaterials can manipulate and control sound waves in ways that are not possible in conventional materials. Their study 
 has also drawn increasing interest in recent times due to their potential application in cloaking \cite{fink, rev1,rev2}.  

Metamaterials can be assembled into structures (typically periodic but not necessarily so) that are similar to continuous materials, yet have unusual wave properties that differ substantially from those of conventional media.  Subwavelength resonators are the building blocks of metamaterials. Because of the subwavelength scale of the resonators, it is possible to describe the marcoscopic behavior of a metamaterial using homogenization theory, and this results in an effective medium having negative or high contrast parameters. Metamaterials with negative or high contrast refractive indices offer new possibilities for imaging and for the control of waves at deep subwavelength scales \cite{alu}.  

In acoustics, it is known that air bubbles are subwavelength resonators \cite{Minnaert1933}. Due to the high contrast between the air density inside and outside an air bubble in a fluid,  a quasi-static acoustic resonance known as the Minnaert resonance occurs \cite{H3a}. At or near this resonant frequency, the size of a bubble can be up to three orders of magnitude smaller than the wavelength of the incident wave, and the bubble behaves as a strong monopole scatterer of sound. The Minnaert resonance phenomenon makes air bubbles good candidates for acoustic subwavelength resonators.  They have the potential to serve as the basic building blocks for acoustic metamaterials, which include bubbly fluids \cite{leroy1, leroy2, leroy3}. This motivated our series of bubble studies \cite{H3a,AFLYZ,Ammari_David,Ammari_David2,Ammari_Hai}. We refer to \cite{H3a} for a rigorous mathematical treatment of Minnaert resonance and the derivation of the monopole approximation in the case of a single, arbitrary shaped bubble in a homogeneous medium. 

As shown in \cite{Ammari_Hai}, around the Minnaert resonant frequency, an effective medium theory can be derived in the dilute regime. Furthermore, {above} the Minnaert resonant frequency, the real part of the effective bulk {{modulus}} is negative, and consequently the bubbly fluid behaves as a diffusive medium for the acoustic waves. Meanwhile, below the Minnaert resonant frequency, with an appropriate bubble volume fraction, a high contrast effective medium can be obtained, making the sub-wavelength focusing or super-focusing of waves achievable \cite{Ammari2015_a}. These properties show that the bubbly fluid functions like an acoustic metamaterial, and indicate that a sub-wavelength bandgap opening occurs at the Minneaert resonant frequency \cite{leroy1}. We remark that such behavior is rather analogous to the coupling of electromagnetic waves with plasmonic nanoparticles, which results in effective negative or high contrast dielectric constants for frequencies near the plasmonic resonance frequencies \cite{pierre,matias1,matias2}.

In \cite{AFLYZ}, the opening of a sub-wavelength phononic bandgap is demonstrated by considering a periodic arrangement of bubbles and exploiting their Minnaert resonance.  
It is shown that there exists a subwavelength band gap in such a bubbly crystal. This subwavelength band gap is mainly due to the cell resonance of the bubbles in the quasi-static regime and is quite different from the usual band gaps in photonic/phononic crystals, where the gap opens at a wavelength which is comparable to the period of the structure \cite{figotin, arma, akl}. In \cite{homogenization}, the homogenization theory of the bubbly crystal near the frequency where the band gap opens is further investigated. It is shown that the band gap opens at the  corner (edge in two dimensions) of the Brillouin zone. Moreover, explicit formulas for the Bloch eigenfunctions are derived. This makes both the homogenization theory and the justification of the superfocusing phenomenon in the non-dilute case possible. We also refer to \cite{Ammari_David} for the related work on bubbly metasurfaces in which a homogenization theory is developed for a thin layer of periodically arranged bubbles mounted on a perfectly reflecting surface.

In this paper, we aim to understand the mechanism behind the double-negative refractive index phenomenon in bubbly fluids. The design of double-negative metamaterials generally requires the use of two different kinds of building blocks or specific subwavelength resonators presenting multiple overlapping resonances. Such a requirement limits the applicability of double-negative metamaterials. Herein we rely on media that consists of only a single type of resonant element, and show how to turn the acoustic metamaterial with a single negative effective property obtained in \cite{Ammari_Hai} into a negative refractive index metamaterial, which refracts waves negatively, hence acting as a superlens \cite{pendry2000, milton1994, milton2007}.

Our main result is to prove that, using bubble dimers, the effective mass density and bulk modulus of the bubbly fluid can both be negative over a non empty range of frequencies. A bubble dimer is a system consists of two identical separated bubbles.   It features two slightly different subwavelength resonances, called the hybridized Minnaert resonances.  
We establish  a rigorous mathematical justification of the Minnaert resonance hybridization in the case of a  bubble dimer in a homogeneous medium. We analyze the acoustic properties of the bubble dimer, derive asymptotic formulas for the two resonances, and prove that the bubble dimer can be approximated as a point scatterer with monopole and dipole modes. The hybridized Minnaert resonances are fundamentally different modes. The first mode is, as in the case of a single bubble,  a monopole mode, while the second mode is a dipole mode. The resonance associated with the dipole mode is usually referred to as the anti-resonance. For an appropriate volume fraction, when the excitation frequency is close to the anti-resonance, we obtain a double-negative effective mass density and bulk modulus for bubbly media consisting of a large number of bubble dimers with certain conditions on their distribution. The dipole modes in the background medium contribute to the effective mass density while the monopole modes contribute to the effective bulk modulus.

The paper is organized as follows. In Section \ref{sec:setup} we introduce some preliminaries on layer potentials.  In Section \ref{sec:pre} we describe the hybridization phenomenon for a bubble dimer, prove that two subwavelength resonances occur, and compute their asymptotic expansions in terms of the mass density contrast. In Section \ref{sec:approx} we prove that the bubble dimer can be approximated as the sum of a monopole source and a dipole source. In Section \ref{sec:homo} we derive a double-negative effective medium theory for bubbly media. Finally, in Section \ref{sec:num} we compute effective mass density and bulk modulus dispersion curves near the hybridized Minnaert resonances to illustrate the double-negative property of bubbly media.

\section{Preliminaries} \label{sec:setup}

\subsection{Layer potentials}

For a given bounded domain $D$ in  $\mathbb{R}^{3}$, with Lipschitz boundary $\partial D$, the single layer potential of the density function $\varphi\in L^{2}(\partial D)$ is defined by
\begin{equation*}
\mathcal{S}_{D}^k[\varphi](\Bx):=\int_{\partial D} G( \Bx-\By, k ) \varphi(\By)d\sigma(\By),~~~\Bx\in \mathbb{R}^{3},
\end{equation*}
where $G(\Bx, k)$ is the fundamental solution to $\triangle + k^2 $, i.e., 
\begin{equation}
G(\Bx, k) = 
-\frac{1}{4\pi|\Bx|} {\exp (ik|\Bx|)}.
\end{equation}

The following jump relation holds:
\begin{equation} \label{jump1}
\left. \frac{\partial}{\partial\nu} \mathcal{S}_{D}^k[\varphi]\right|_{\pm}(\Bx)=\left (\pm\frac{1}{2}I+\mathcal{K}_{D}^{k,*}\right)[\varphi](\Bx),~~~\Bx\in \partial D,
\end{equation}
where the Neumann-Poincar\'e operator $\mathcal{K}_{D}^{k,*}$ is defined by 
\begin{equation*}
\mathcal{K}_{D}^{k,*}[\varphi](x)=\int_{\partial D}\pd {G(\Bx -\By, k)}{\nu(\Bx)} \varphi(\By)d\sigma(\By),~~~\Bx\in \partial D.
\end{equation*}
Here $\pd{}{\nu}$ denotes the normal derivative on $\partial D$, and the subscripts $+$ and $-$ indicate the limits from outside and inside $D$, respectively.
 
We make use of low frequency asymptotic expansions of the layer potentials. For a small parameter $\epsilon$, $G(\Bx,\epsilon k)$ can be expanded as
 \beq
 G(\Bx, \epsilon k) = -\frac{1}{4\pi} \sum_{n=0}^\infty  \frac{(i\epsilon k)^n}{n!} | \Bx |^{n-1},
 \eeq
and it follows that
\begin{align}\label{layer_asymp}
 \Scal_{D}^{\epsilon k} [\phi]= \sum_{n=0}^\infty (\epsilon k )^n \Scal_{D}^n [\phi], \quad
 \Kcal_{D}^{\epsilon k}[\phi] = \sum_{n=0}^\infty (\epsilon k)^n \Kcal_{D}^n [\phi],
 \end{align}
where
 \begin{align}
 \Scal_{D}^n [\phi] (\Bx)& := -\frac{i^n}{4\pi n!}  \int_{\p D} |\Bx - \By|^{n-1} \phi(\By) d\sigma(\By),\\
 \Kcal_{D}^n [\phi] (\Bx)& := -\frac{i^n (n-1)}{4\pi n!}  \int_{\p D} {\langle\, \Bx -\By, \nu_\Bx \rangle }{|\Bx - \By|^{n-3}}\phi(\By) d\sigma(\By).
\end{align}
It is known that $\mathcal{S}_D^0 : L^2(\p D)\rightarrow H^1(\p D)$ is invertible and that its inverse is bounded.

Throughout the paper we assume that $D$ has two connected components $D_1$ and $D_2$, and we use the following notation:
\begin{equation} \label{capaciatnce}
\int_{\p D_1 - \p D_2} f(\By)d\sigma(\By) = \int_{\p D_1} f(\By)d\sigma(\By)- 
\int_{\p D_2} f(\By)d\sigma(\By), \end{equation}
where $f$ is a function defined on $\p D = \p D_1\cup \p D_2$. 
 
Finally, we present some useful formulas which are frequently used in the sequel. 

 \begin{lemma}\label{lem:trick_layer}
The following identities hold for any $\phi\in L^2(\p D)$: for j = 1, 2,
\begin{itemize}
\item[(i)]
$\int_{\p D_j}   \left(\frac{1}{2}I-\mathcal{K}_{ D}^{0,* } \right) \phi(y) d\sigma(y) =0;$

\item[(ii)]
$\int_{\p D_j}   \left(\frac{1}{2}I+\mathcal{K}_{ D}^{0,* } \right) \phi(y) d\sigma(y) =\int_{\p D_j}\phi(y) d\sigma(y);$
\item[(iii)]
$
 \int_{\p D_j} \Kcal_D^2 [\phi] = -\int_{D_j}\Scal_D^0 [\phi];
$
\item[(iv)]
$
 \int_{\p D_j} \Kcal_D^3[\phi] = {i|D_j|}/{(4\pi)} \int_{\p D} \phi.
$

\end{itemize}
 \end{lemma}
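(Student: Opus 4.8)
The plan is to deduce all four identities from a single mechanism: each operator $\Kcal_D^n$ on the left-hand sides is, up to the jump in the case $n=0$, the trace on $\p D$ of the normal derivative of the harmonic-expansion potential $\Scal_D^n$, so that an integral over $\p D_j$ can be converted by the Gauss--Green formula on the component $D_j$ into the integral over $D_j$ of $\Delta\Scal_D^n[\phi]$, which is elementary. Concretely, first I would record the jump relations obtained by expanding \eqref{jump1} in powers of $k$ and matching with \eqref{layer_asymp}: on $\p D$ one has $\partial_\nu\Scal_D^0[\phi]\big|_{\pm}=(\pm\tfrac12 I+\Kcal_D^0)[\phi]$, while for every $n\ge 1$ the normal derivative is continuous across $\p D$ and equals $\Kcal_D^n[\phi]$ (note that the $\Kcal_D^n$ of the statement, carrying the normal at the evaluation point, is exactly the $n$-th Taylor coefficient of $\Kcal_D^{k,*}$). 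Since in $\R^3$ we have $\Delta_\Bx|\Bx-\By|=2/|\Bx-\By|$ and $\Delta_\Bx|\Bx-\By|^2=6$, differentiating under the integral sign, which is legitimate for $\Bx$ in the open set $D_j$, gives $\Delta\Scal_D^0[\phi]=0$, $\Delta\Scal_D^2[\phi]=-\Scal_D^0[\phi]$, and $\Delta\Scal_D^3[\phi]=\tfrac{i}{4\pi}\int_{\p D}\phi$ (a constant) throughout $D_j$.

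With these facts the identities follow at once. For (i), harmonicity of $\Scal_D^0[\phi]$ in $D_j$ gives $\int_{\p D_j}(-\tfrac12 I+\Kcal_D^0)[\phi]=\int_{\p D_j}\partial_\nu\Scal_D^0[\phi]\big|_{-}=\int_{D_j}\Delta\Scal_D^0[\phi]=0$, which is (i). Then (ii) is algebra: (i) says $\int_{\p D_j}\Kcal_D^0[\phi]=\tfrac12\int_{\p D_j}\phi$, hence $\int_{\p D_j}(\tfrac12 I+\Kcal_D^0)[\phi]=\int_{\p D_j}\phi$. For (iii) and (iv) the same conversion yields $\int_{\p D_j}\Kcal_D^2[\phi]=\int_{D_j}\Delta\Scal_D^2[\phi]=-\int_{D_j}\Scal_D^0[\phi]$ and $\int_{\p D_j}\Kcal_D^3[\phi]=\int_{D_j}\Delta\Scal_D^3[\phi]=\tfrac{i|D_j|}{4\pi}\int_{\p D}\phi$.

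I do not expect a real obstacle; the only care needed is routine. One should check that $\Scal_D^n[\phi]$ is regular enough for the Gauss--Green formula on the Lipschitz domain $D_j$ --- it is $C^1$ up to $\p D_j$ with $\Delta\Scal_D^n[\phi]\in L^1(D_j)$, by the standard single-layer potential theory used throughout the paper --- and one should keep track of which operator $\Kcal_D^n$ denotes. As an alternative that avoids the potentials $\Scal_D^n$ for $n\ge 1$, identities (iii) and (iv) can be proved directly by Fubini: for fixed $\By$, write $\langle\Bx-\By,\nu_\Bx\rangle|\Bx-\By|^{n-3}=\nu_\Bx\cdot\nabla_\Bx\big(\tfrac{1}{n-1}|\Bx-\By|^{n-1}\big)$, integrate in $\Bx$ over $\p D_j$ first and apply the divergence theorem in $D_j$ to the inner integral --- valid even for $\By\in\p D_j$ because the vector field is bounded on $D_j$ when $n\ge 2$ --- and then reassemble to recognize $-\int_{D_j}\Scal_D^0[\phi]$ and $\tfrac{i|D_j|}{4\pi}\int_{\p D}\phi$, respectively.
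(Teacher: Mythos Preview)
Your proposal is correct and essentially matches the paper's proof: the paper proves (i) via the interior jump relation and harmonicity of $\Scal_D^0[\phi]$ in $D$, obtains (ii) from (i), and for (iii)--(iv) it carries out precisely the direct Fubini/divergence-theorem computation you sketch as your ``alternative.'' Your primary framing via $\Delta\Scal_D^n[\phi]$ is just a repackaging of that same computation, so there is no substantive difference.
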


 \begin{proof}
 (i) follows from the jump relation $(-\frac{1}{2}I+\mathcal{K}_{ D}^{0,* })[\phi] = \p \mathcal{S}_D^0 [\phi]/\p\nu|^-_{\p D}$ and the fact that $\mathcal{S}_D^0 [\phi]$ is harmonic in $D$. (ii) immediately follows from (i). For (iii), we have 
  \begin{align*}
 \int_{\p D_j} \Kcal_D^2 \phi &= \frac{1}{8\pi} \int_{\p D_j} \pd{}{\nu_\Bx} \int_{\p D} |\Bx- \By| \phi (\By)
 =\frac{1}{8\pi} \int_{ D_j} \int_{\p D} \frac{2}{|\Bx- \By|} \phi (\By)= -\int_{D_j}\Scal_D^0 [\phi]. 
 \end{align*}
Finally, it holds that
 \begin{align*}
 \int_{\p D_j} \Kcal_D^3[\phi] &=\frac{i}{12\pi} \int_{\p D_j} \int_{\p D} \langle\, \Bx -\By, \nu_\Bx \rangle \phi(\By) = \frac{i|D_j|}{4\pi} \int_{\p D} \phi, 
 \end{align*}
which proves (iv).
\end{proof}

 \subsection{Capacitance coefficients}\label{subsec:cap}
Let $\psi_1$, $\psi_2 \in L^2(\p D)$ be given by
\begin{align}
\mathcal{S}_D^0[\psi_1] = \begin{cases} 1 \quad \mbox{on}~\partial D_1,\\ 0 \quad \mbox{on}~\partial D_2, \end{cases} \quad\mathcal{S}_D^0[\psi_2] = \begin{cases} 0\quad \mbox{on}~\partial D_1,\\ 1\quad \mbox{on}~\partial D_2 . \end{cases}
\end{align}
Then, using \eqnref{jump1}, it is easy to check that 
 \beq\label{ker_1_2_K} \ker \left( -\frac{1}{2} I + \mathcal{K}_D^{0,*}\right)= \mbox{span } \{\psi_1, \psi_2 \}.
 \eeq
We define the capacitance coefficients matrix $C=(C_{ij})$ by
$$ C_{ij} := - \int_{\partial D_j} \psi_i,\quad i,j=1,2.$$
We remark that the matrix $C$ is positive definite and symmetric. 

Suppose $D_1$ and $D_2$ are identical balls of radius $r_0$ separated by a distance $d_0$ such that the distance between the centers of the balls is $d_0+2 r_0$. Then 
$C_{11}=C_{22}, \ C_{12}=C_{21}, \ C_{11}>0$, and $C_{12}<0$.
Explicit formulas for the capacitance coefficients for two balls can be obtained using bispherical coordinates \cite{Lek}. We have the following result:
\begin{align*}
C_{11} &= 8\pi \alpha \sum_{n=0}^\infty \frac{e^{(2n+1)T}}{e^{2(2n+1)T}-1},
\\
C_{12} &= -8\pi \alpha\sum_{n=0}^\infty \frac{1}{e^{2(2n+1)T}-1},
\\
C_{21}&=C_{12}, \quad C_{22}=C_{11},
\end{align*}
where
$$
\alpha=\sqrt{d_0(r_0+d_0/4)}
,\quad
T=\sinh^{-1}(r_0/a).
$$


\begin{lemma} \label{lem-trick}
Suppose that $D_1$ and $D_2$ are two identical balls.
Then the following identities hold for any $\phi \in L^2(\p D)$:
\begin{enumerate}

\item[(i)]
$\int_{\p D_j} \Kcal_D^3[\psi_1-\psi_2]d\sigma=0,\quad j=1,2;$

\item[(ii)]
$
\int_{\p D_1 - \p D_2} (\mathcal{S}_D^0)^{-1} \mathcal{S}_D^1[\phi] d\sigma =0;
$

\item[(iii)]
$\int_{\partial D_1- \partial D_2} (\mathcal{S}_D^0)^{-1}[y_i]d\sigma(y) = \int_{\partial D} y_i(\psi_1-\psi_2)d\sigma(y).$


\end{enumerate}
\end{lemma}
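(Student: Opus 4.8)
The three identities are essentially bookkeeping built on the low-frequency layer-potential expansions, the defining relations for $\psi_1,\psi_2$, and the symmetry $C_{11}=C_{22}$, $C_{12}=C_{21}$ that holds for two identical balls; I would prove them one at a time. For (i), the plan is to apply Lemma~\ref{lem:trick_layer}(iv) with $\phi=\psi_1-\psi_2$, which gives $\int_{\p D_j}\Kcal_D^3[\psi_1-\psi_2]=\tfrac{i|D_j|}{4\pi}\int_{\p D}(\psi_1-\psi_2)$, so it remains only to show $\int_{\p D}(\psi_1-\psi_2)=0$. Splitting the integral over $\p D_1$ and $\p D_2$ and using $\int_{\p D_j}\psi_i=-C_{ij}$ turns it into $-C_{11}-C_{12}+C_{21}+C_{22}$, which vanishes by the symmetry of the capacitance matrix for identical balls.

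For (ii), the key observation is that, by the formula for $\Scal_D^n$ with $n=1$, the function $\Scal_D^1[\phi]$ is the \emph{constant} $-\tfrac{i}{4\pi}\int_{\p D}\phi$ on all of $\p D$. Writing $\mathbf 1$ for the constant function $1$, the defining relations for $\psi_1,\psi_2$ yield $\Scal_D^0[\psi_1+\psi_2]=\mathbf 1$, hence $(\Scal_D^0)^{-1}\Scal_D^1[\phi]=\bigl(-\tfrac{i}{4\pi}\int_{\p D}\phi\bigr)(\psi_1+\psi_2)$. Integrating over $\p D_1-\p D_2$ and again using $\int_{\p D_j}\psi_i=-C_{ij}$ leaves the factor $-C_{11}-C_{21}+C_{12}+C_{22}$, which is $0$ for identical balls, giving the claim.

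For (iii), I would use the self-adjointness of $\Scal_D^0$ on $L^2(\p D)$ (its kernel $-1/(4\pi|\Bx-\By|)$ is symmetric in $\Bx,\By$). Since $\Scal_D^0[\psi_1-\psi_2]$ equals $1$ on $\p D_1$ and $-1$ on $\p D_2$, the left-hand side can be rewritten as $\int_{\p D}\Scal_D^0[\psi_1-\psi_2]\,(\Scal_D^0)^{-1}[y_i]\,d\sigma$; moving $\Scal_D^0$ onto the other factor and cancelling with $(\Scal_D^0)^{-1}$ produces $\int_{\p D}(\psi_1-\psi_2)\,y_i\,d\sigma$, which is the right-hand side. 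Note that this part does not actually use that the two components are identical balls.

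The computations are short, and there is no real obstacle here: the only points requiring care are the sign in the definition $C_{ij}=-\int_{\p D_j}\psi_i$ and the recognition that $\Scal_D^1$ collapses every density to a constant, after which (i) and (ii) follow immediately from the matrix symmetry and (iii) is a one-line self-adjointness argument. If anything, one should check that $y_i|_{\p D}$ lies in $H^1(\p D)=\operatorname{ran}\Scal_D^0$ so that $(\Scal_D^0)^{-1}[y_i]$ is well-defined, which holds since $y_i$ is smooth on $\p D$.
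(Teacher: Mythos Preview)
Your argument is correct. For (i) and (ii) you are doing exactly what the paper does---invoking Lemma~\ref{lem:trick_layer}(iv), the fact that $\Scal_D^1[\phi]$ is the constant $-\tfrac{i}{4\pi}\int_{\p D}\phi$, and the capacitance symmetry $C_{11}=C_{22}$, $C_{12}=C_{21}$---only with the details written out more explicitly.

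For (iii) your route differs from the paper's. The paper sets $\phi_i=(\Scal_D^0)^{-1}[y_i]$, uses the jump relation to write $\phi_i=\partial_\nu\Scal_D^0[\phi_i]|_+-\partial_\nu\Scal_D^0[\phi_i]|_-$, drops the interior normal derivative via harmonicity, rewrites the remaining boundary integral as $\int_{\p D}\Scal_D^0[\psi_1-\psi_2]\,\partial_\nu\Scal_D^0[\phi_i]|_+$, and then applies Green's identity in the exterior together with $\partial_\nu\Scal_D^0[\psi_1-\psi_2]|_-=0$ (since $\Scal_D^0[\psi_1-\psi_2]$ is locally constant in $D$) to land on $\int_{\p D}(\psi_1-\psi_2)y_i$. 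Your self-adjointness argument short-circuits all of this: since the kernel of $\Scal_D^0$ is symmetric, one simply moves $\Scal_D^0$ across the $L^2$ pairing and cancels it against $(\Scal_D^0)^{-1}$. This is more elementary and, as you note, does not even need the two components to be identical. The paper's approach, by contrast, stays entirely within the potential-theoretic toolkit (jump relations and Green's identity) already in play elsewhere in the argument, which keeps the presentation self-contained in that language.
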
 

\begin{proof}
(i) and (ii) follow from Lemma \ref{lem:trick_layer} (iv), the definition of $\mathcal{S}_D^1$ and the symmetry of $D_1\cup D_2$. 
For (iii), by letting $\phi_i(x):=(\mathcal{S}_D^0)^{-1}[y_i](x)$ and using the jump relations \eqnref{jump1} and the fact that $\mathcal{S}_D^0[\phi]$ is harmonic in $D$, we can check that
\begin{align*}
\int_{\partial D_1- \partial D_2} (\mathcal{S}_D^0)^{-1}[y_i]d\sigma(y) &=  \int_{\partial D_1- \partial D_2} 1 \cdot \left(\frac{\p \mathcal{S}_D^0[\phi_i]}{\p\nu}\Big|_+ - \frac{\p \mathcal{S}_D^0[\phi_i]}{\p\nu}\Big|_-  \right)
\\
&=\int_{\partial D_1- \partial D_2} 1 \cdot \frac{\p \mathcal{S}_D^0[\phi_i]}{\p\nu}\Big|_+ 
=\int_{\partial D} \mathcal{S}_D[\psi_1-\psi_2] \cdot \frac{\p \mathcal{S}_D^0[\phi_i]}{\p\nu}\Big|_+ .
\end{align*}
Then, since  and $\mathcal{S}_D^0[\phi_i]|_{\p D}=y_i$, the Green's identity yields (iii).
\end{proof}

\section{Resonance for a dimer consisting of two identical bubbles } \label{sec:pre}
In this section, we consider the quasi-static resonances of a bubble dimer. 
Throughout, we denote by $D$ the normalized bubble dimer which consists of two identical balls $D_1$ and $D_2$, both with volume one. We assume that the two balls $D_1$ and $D_2$ are symmetric with respect to the origin. Moreover,  $D_1$ and $D_2$ are aligned with respect to the $x_1$-axis. For a general dimer featuring two identical balls, we call the unit direction vector along which the two balls are aligned the orientation of the dimer.
Let $(\kappa_b, \rho_b)$ and $(\kappa, \rho)$ be the bulk modulus and density of air and water, respectively.
Let $u^{in}$ be the incident wave which we assume to be a plane wave for simplicity:
$$
u^{in}(\Bx)= e^{ i \omega \sqrt{\rho/\kappa} \Bx \cdot \theta},
$$
where $\theta$ is a unit vector in $\RR^3$. 
Then the acoustic wave propagation can be modeled as 
\beq \label{HP1}
\begin{cases}
\nabla \cdot \left( \frac{1}{\rho_b}  \chi_D + \frac{1}{\rho} \chi(\RR^3\setminus \overline{D})\right) \nabla u + \omega^2 \left(  \frac{1}{\kappa_b}  \chi_D + \frac{1}{\kappa} \chi(\RR^3\setminus \overline{D})\right)u=0 \quad \mbox{in} ~ \RR^3,\\
u -u^{in} \mbox{ satisfies the Sommerfeld radiation condition at infinity.}
\end{cases}
\eeq
Recall that the  Sommerfeld radiation condition at infinity can be expressed by
$$
\big| \frac{\partial }{\partial |x|} (u - u^{in}) (\Bx) - \omega \sqrt{\rho/\kappa} (u - u^{in}) (\Bx)\big| =O(\frac{1}{|\Bx|^2}),
$$
uniformly in $\Bx/|\Bx|$ as $|\Bx| \rightarrow + \infty$.

Let 
\begin{equation} \label{v}  v=\sqrt{\frac{\kappa}{\rho}}, \ ~ v_b=\sqrt{\frac{\kappa_b}{\rho_b}}, \ ~k=\omega \sqrt{\frac{\rho}{\kappa}}, \ ~ k_b=\omega \sqrt{\frac{\rho_b}{\kappa_b}}.\end{equation}
We assume that \begin{equation} \label{delta} 
\delta:=\frac{\rho_b}{\rho} \ll 1 \end{equation}  and \begin{equation} 
v, v_b=O(1).\end{equation}

From, for example, \cite{AK04},
we know that the solution to \eqnref{HP1} can be represented using
the single layer potentials $\Scal_D^{k_b}$ and $\Scal_D^{k}$ as 
follows:
 \beq\label{helmScal}
 u(\Bx) = \begin{cases}
 u^{in}(\Bx)+\Scal_D^{ k}[\psi](\Bx),\quad \Bx\in\RR^3\setminus\bar{D},\\
 \Scal_D^{k_b} [\phi](\Bx),\quad \Bx\in D,
 \end{cases}
 \eeq
where the pair $(\phi,\psi)\in L^2(\p D)\times L^2(\p D)$ is
a solution to \beq\label{phipsi} \ \left\{
\begin{array}{l}
\ds \Scal^{k_b}_D[\phi] - \Scal_D^k[\psi] = u^{in}\\
\nm
\ds \frac{1}{\rho_b}\left (-\frac{1}{2}I+\mathcal{K}_{ D}^{k_b,*}\right)[\phi]
-\frac{1}{\rho}\left (\frac{1}{2}I+\mathcal{K}_{ D}^{k,*}\right)[\psi]=\frac{1}{\rho}\pd{u^{in}}{\nu}
\end{array}\right.
\quad\mbox{on }\p D. \eeq  
We denote by
 \beq
 \Acal_\delta^\omega := 
 \begin{bmatrix}
 \Scal^{k_b}_D & -\Scal^{ k}_D\\
\left(-\frac{1}{2}I+\mathcal{K}_{D}^{k_b,* }  \right)& -\delta \left (\frac{1}{2}I+\mathcal{K}_{D}^{k,*}\right) 
 \end{bmatrix}.
 \eeq
Then (\ref{phipsi}) can be written as
\[
\mathcal{A}_\delta^\omega \begin{pmatrix} \phi \\ \psi \end{pmatrix}=
\begin{pmatrix} u^{in} \\ \delta\pd{u^{in}}{\nu} \end{pmatrix}.
\]
It is well-known that the above integral equation has a unique solution 
for all real frequencies $\omega$. 

The resonance of the bubble dimer $D$ in the scattering problem \eqnref{HP1} can be defined as all the complex numbers $\omega$ with negative imaginary part such that there exists a nontrivial solution to the following equation:
\beq\label{eq_A_delta_omega_zero}
\mathcal{A}_\delta^\omega \begin{pmatrix} \phi \\ \psi \end{pmatrix}=0.
\eeq
These can be viewed as the characteristic values of the operator-valued analytic function $\mathcal{A}_\delta^\omega$ (with respect to $\omega$); see \cite{akl}.

It can be shown that $\omega=0$  is a characteristic value of $\Acal_0^\omega$ when $\delta=0$. Then Gohberg-Sigal theory \cite{akl} tells us that there exists a characteristic value $\omega_0=\omega_0(\delta)$ such that $\omega_0(0)=0$ and $\omega_0$ depends on $\delta$ continuously. We call this characteristic value the quasi-static resonance (or Minnaert resonance) \cite{H3a}. We now present the main result on the quasi-static resonances of the bubble dimer $D$.

\begin{theorem}
\label{sphere_res} 
There are two quasi-static resonances with positive real part for the bubble dimer $D$. Moreover, they have the following asymptotic expansions as $\delta$, defined by (\ref{delta}), goes to zero: 
\begin{align}
 \omega_1 &=  \sqrt{ \delta v_b^2 (C_{11}+ C_{12})}-i\tau_1 \delta +O(\delta^{3/2}), \label{eq-omega1}\\
\omega_2&= \sqrt{\delta v_b^2 (C_{11}-C_{12})}  + \delta^{3/2} \hat\eta_1 + i\delta^2 \hat\eta_2 + O(\delta^{5/2}) \label{eq-omega2},
\end{align}
where  
$$ 
\tau_1 = \frac{v_b^2}{4\pi  v}(C_{11}+C_{12})^2,
$$ 
and $\hat\eta_1$ and $\hat\eta_2$ are real numbers which are determined by $D$, $v$, and $v_b$. The two resonances $\omega_1$ and $\omega_2$ are called the hybridized resonances of the bubble dimmer $D$. The resonance $\omega_2$ is referred to as the anti-resonance.

\end{theorem}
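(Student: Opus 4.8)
The plan is to analyze the characteristic value equation \eqnref{eq_A_delta_omega_zero} via a perturbation argument around $\delta=0$, $\omega=0$, exploiting the block structure of $\Acal_\delta^\omega$ together with the low-frequency expansions \eqnref{layer_asymp}. First I would establish the leading-order behavior: at $\delta=0$ and $\omega=0$, the operator $\Acal_0^0$ has nontrivial kernel, and by the analysis in \cite{ker_1_2_K} and Lemma \ref{lem:trick_layer}, the relevant reduced equation on the ``$\phi$-side'' involves the operator $\left(-\frac12 I + \Kcal_D^{0,*}\right)$, whose kernel is $\mathrm{span}\{\psi_1,\psi_2\}$. Since $D$ has two connected components, this kernel is two-dimensional, which is the source of the \emph{two} resonances. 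The natural basis adapted to the symmetry of the dimer is $\{\psi_1+\psi_2, \psi_1-\psi_2\}$: the symmetric combination will give the monopole-type resonance $\omega_1$ and the antisymmetric combination the dipole-type anti-resonance $\omega_2$.

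Next I would expand $\Acal_\delta^\omega$ in powers of $\omega$ (using \eqnref{layer_asymp}, so $k=\omega/v$, $k_b=\omega/v_b$) and in $\delta$, and project the equation $\Acal_\delta^\omega(\phi,\psi)^T = 0$ onto the two-dimensional kernel using the functionals $\phi \mapsto \int_{\p D_j}\phi$, $j=1,2$ — equivalently, integrating the second row of \eqnref{eq_A_delta_omega_zero} over $\p D_1$ and over $\p D_2$. Applying Lemma \ref{lem:trick_layer}(i)--(iv) and Lemma \ref{lem-trick}, the solvability condition reduces, at leading order, to a $2\times 2$ algebraic eigenvalue problem of the form $\omega^2 \, \mathrm{Id} \approx \delta\, v_b^2\, C$ acting on the vector of component-masses, whose eigenvalues are $\delta v_b^2(C_{11}+C_{12})$ and $\delta v_b^2(C_{11}-C_{12})$ — using $C_{11}=C_{22}$, $C_{12}=C_{21}$ — with eigenvectors the symmetric and antisymmetric modes respectively. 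Taking square roots gives the real parts of $\omega_1$ and $\omega_2$ at order $\delta^{1/2}$.

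The imaginary parts come from pushing the expansion to the next order, where the radiative terms enter. For $\omega_1$ (monopole), the first correction is the $\Kcal_D^3$-type term, which by Lemma \ref{lem:trick_layer}(iv) produces a purely imaginary contribution at order $\delta$, yielding $-i\tau_1\delta$ with $\tau_1 = \frac{v_b^2}{4\pi v}(C_{11}+C_{12})^2$; here I would track the constant carefully through the identity $\int_{\p D_j}\Kcal_D^3[\phi] = \frac{i|D_j|}{4\pi}\int_{\p D}\phi$ and $|D_j|=1$. For $\omega_2$ (dipole), the crucial point — and the main obstacle — is that the leading radiation correction \emph{vanishes} for the antisymmetric mode: Lemma \ref{lem-trick}(i) says precisely $\int_{\p D_j}\Kcal_D^3[\psi_1-\psi_2]\,d\sigma = 0$, so the monopole radiation loss is absent, and one must go to higher order. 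This is why $\omega_2$ has a real correction at order $\delta^{3/2}$ (from $\Scal_D^1$, $\Kcal_D^2$ terms, controlled via Lemma \ref{lem-trick}(ii)--(iii)) and the imaginary (damping) correction only at order $\delta^2$. Extracting $\hat\eta_1,\hat\eta_2$ requires computing the second-order terms in the projected system and invoking Lemma \ref{lem-trick}(iii) to express dipole moments in terms of capacitance-type quantities; I would not evaluate these constants explicitly beyond noting they are real and depend only on $D,v,v_b$. Finally, existence and the stated error orders follow from the Gohberg--Sigal theory \cite{akl} applied to the analytic operator family, together with a fixed-point/implicit-function argument on the reduced characteristic equations, and the positive-real-part claim follows since both square roots are real and positive to leading order while the corrections are of strictly higher order in $\delta$.
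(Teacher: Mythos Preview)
Your proposal is correct and follows essentially the same approach as the paper: eliminate $\psi$ in favor of $\phi$, project onto the two-dimensional kernel $\mathrm{span}\{\psi_1,\psi_2\}$ by integrating the second row over $\p D_1$ and $\p D_2$, solve the resulting $2\times2$ system for the leading terms and the $O(\delta)$ correction of $\omega_1$, and then---because Lemma~\ref{lem-trick}(i) kills the $\Kcal_D^3$ contribution for the antisymmetric mode---carry out a higher-order expansion (the paper writes $\phi=\psi_1-\psi_2+\delta\phi_1+\delta^{3/2}\phi_2+\cdots$ and integrates over $\p D_1-\p D_2$) to extract $\hat\eta_1$ at order $\delta^{3/2}$ and $\hat\eta_2$ at order $\delta^2$. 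One small point to watch when you ``track the constant carefully'' for $\tau_1$: the imaginary $O(\delta)$ correction to $\omega_1$ receives contributions not only from $\Kcal_D^3$ but also from the term $\frac{\delta(k_b-k)}{4\pi i}(\psi_1+\psi_2)\int_{\p D}\phi$ coming from the relation between $\psi$ and $\phi$; the two combine to give the stated $\tau_1=\frac{v_b^2}{4\pi v}(C_{11}+C_{12})^2$.
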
 
\begin{proof} Step 1. Suppose that $(\phi,~\psi)$ is a nontrivial solution to \eqnref{eq_A_delta_omega_zero} for some small $\omega=\omega(\delta)$.

Using the asymptotic expansions \eqnref{layer_asymp} of the single layer potential and the Neumann-Poincar\'e operator, together with the fact that $k=O(\omega)$, we have 
  \begin{equation}\label{asymp_int_eq}\begin{cases}
  &\Scal_D^0[\phi - \psi]+  k_b \Scal_D^1[\phi] -  k \Scal_D^1[\psi] =O(\omega^2),\\
&  \left(-\frac{1}{2}I+\mathcal{K}_{ D}^{0,* } + k_b^2 \mathcal{K}_{D}^{2 } +k_b^3\mathcal{K}_D^3\right)[\phi]-\delta\left (\frac{1}{2}I+\mathcal{K}_{ D}^{0,*}\right) [\psi] =O(\omega^4+\delta\omega^2).
  \end{cases}\end{equation}
 
 From the first equation of \eqnref{asymp_int_eq} and the definition of $\mathcal{S}_D^1$, it holds that
 $$
 \psi  =  \phi-\frac{1}{4\pi i}(\mathcal{S}_D^0)^{-1}
 \left( k_b \int_{\p D} \phi - k \int_{\p D}\psi \right) + O(\omega^2).
 $$
 Then, from the fact that $\phi-\psi = O(\omega)$ and the definition of $\psi_j$, we obtain
 \begin{equation} \label{plugint} 
 \psi=\phi+\frac{(k_b -k)}{4\pi i} (\psi_1+\psi_2)\int_{\partial D} \phi + O(\omega^2).
 \end{equation}
 Plugging  (\ref{plugint}) into the second equation of \eqnref{asymp_int_eq}, we get
 
  \begin{align}\label{approx_eq}
 &\left(-\frac{1}{2}I+\mathcal{K}_{ D}^{0,* } \right)[\phi] + \left(k_b^2 \mathcal{K}_{D}^{2 } + k_b^3 \mathcal{K}_{D}^{3 }-\delta\left (\frac{1}{2}I+\mathcal{K}_{ D}^{0,*}\right) \right)[\phi]
 \nonumber\\
& \qquad\qquad\qquad\qquad\qquad -\frac{\delta(k_b -k)}{4\pi i} (\psi_1+\psi_2)\int_{\partial D} \phi =O(\omega^4+\delta \omega^2),
  \end{align}
  where we have used the identity
  \begin{align*}
  \psi_j &= \frac{\p\mathcal{S}_D}{\p\nu}\Big|_{\p D}^+[\psi_j]- \frac{\p\mathcal{S}_D}{\p\nu}\Big|_{\p D}^-[\psi_j]
  \\&=\frac{\p\mathcal{S}_D}{\p\nu}\Big|_{\p D}^+[\psi_j]-0=\left(\frac{1}{2}I+\mathcal{K}_{ D}^{0,*}\right) [\psi_j].  
  \end{align*}
In view of \eqnref{ker_1_2_K}, a nontrivial solution $\phi$ to \eqnref{approx_eq} can be written as
\begin{align}\label{phi_decomp}
\phi = a\psi_1 + b \psi_2 + O(\omega^2+\delta),\end{align} 
for some nontrivial constants $a,~b$ with $|a|+|b|=O(1)$.

Step 2. 
Recall that $|D_1|= |D_2|=1$, $C_{11}=C_{22}$, and $C_{12}=C_{21}<0$. By integrating \eqnref{approx_eq} over $\partial D_j,~j=1,2$, and then using Lemma \ref{lem:trick_layer}, we have, up to an error of order  $O(\omega^4+\delta \omega^2)$, 
\begin{equation}\label{sys_eq_ball}\begin{cases}
\ds -\frac{ \omega^2}{v_b^2}  a- \frac{i\omega^3 }{4\pi v_b^3} C(a,b)+\delta(a C_{11} + b C_{12} )+\frac{i(v_b^{-1}- v^{-1})\delta\omega}{4\pi }  (C_{11}+C_{12}) C(a,b)=0, \\
\ds - \frac{ \omega^2}{v_b^2}  b  - \frac{i\omega^3}{4\pi  v_b^3}  C(a,b)+\delta(a C_{12} + b C_{11} )+\frac{i( v_b^{-1}- v^{-1})\delta\omega}{4\pi } (C_{12}+C_{11}) C(a,b)=0,
\end{cases}\end{equation}
where 
$$C(a,b):=a(C_{11}+C_{12})+b(C_{21}+C_{22}).$$
Then it follows that
for $\delta \ll 1$, the characteristic values of 
$\Acal^\omega_{\delta}$ are given by
 \beq \label{frequency_form}
 \omega_* = \omega_0+ O(\delta^{3/2}),
 \eeq
for some $\omega_0=O(\sqrt{\delta})$. Solving \eqnref{sys_eq_ball} for $\omega$, we find two characteristic values with positive real parts:
$$ \omega_1 = \sqrt{\delta v_b^2 (C_{11}+C_{12})}+O(\delta), \quad \omega_2 = \sqrt{\delta v_b^2 (C_{11}-C_{12})}+O(\delta),$$
where the corresponding $(a,b)$'s are given by
 \begin{equation}\label{ab_asymp}
 (a,b)=(1,1),~(1,-1),
 \end{equation}
up to an error of order $O(\delta)$. Plugging these values for $a$, $b$ and $\omega_j$ into \eqnref{sys_eq_ball} and solving for the $O(\delta)$ term in $\omega_j$, we obtain 
$$
\omega_1 = \sqrt{\delta v_b^2 (C_{11}+C_{12})}-i \tau_1 \delta +O(\delta^{3/2}), \quad \omega_2= \sqrt{\delta v_b^2 (C_{11}-C_{12})} + 0\cdot \delta + O(\delta^{3/2}). 
$$
The proof of the estimate (\ref{eq-omega1}) for $\omega_1$ is concluded. We refine the estimate for $\omega_2$ in the next step.


Step 3.  Note that $\Im \omega_2$ is of order $\delta^{3/2}$. We perform a further calculation to find an explicit formula for $\Im  \omega_2$. 
Since $(a,b)=(1,-1)+O(\delta)$, we write
$$ \phi=\psi_1-\psi_2 + \delta \phi_1 + \delta^{3/2} \phi_2 + \delta^2 \phi_3+ \delta^{5/2}\phi_4+O(\delta^3),$$
 and $$\omega_2= \delta^{1/2}\eta_0+\delta^{3/2} \eta_1 +\delta^2 \eta_2+ O(\delta^{5/2}),$$
where $\eta_0:= \sqrt{ v_b^2 (C_{11}-C_{12})}$. From the symmetry of $D$, we can normalize $\phi$ so that
$$
\phi_j \perp \mbox{span }\{\psi_1, \psi_2\}=\ker \left( -\frac{1}{2} I + \mathcal{K}_B^{0,*}\right), \quad \mbox{for } j=1, 2, 3, 4, \quad \mbox{in } L^2(\p D). 
$$

Step 4. 
By using \eqnref{layer_asymp}, we have 
  \begin{equation}\label{asymp_int_eq3}\begin{cases}
&\ds \Scal_D^0[\phi - \psi]+  k_b \Scal_D^1[\phi] -  k \Scal_D^1[\psi] + k_b^2 \Scal_D^2[\phi] -  k^2 \Scal_D^2[\psi]+ k_b^3 \Scal_D^3[\phi] -  k^3 \Scal_D^3[\psi]=O(\delta^2),\\
& \ds \left(-\frac{1}{2}I+\mathcal{K}_{ D}^{0,* } + k_b^2 \mathcal{K}_{D}^{2 } +k_b^3\mathcal{K}_D^3+k_b^4\mathcal{K}_D^4+k_b^5\mathcal{K}_D^5\right)[\phi]-\delta\left (\frac{1}{2}I+\mathcal{K}_{ D}^{0,*}+ k^2\mathcal{K}_D^2+k^3\mathcal{K}_D^3\right) [\psi] =O(\delta^3).
  \end{cases}\end{equation}
Hence, we get
\begin{align*}
\ds \psi&= (\Scal_D^0+  k \Scal_D^1 +  k^2 \Scal_D^2 +k^3 \Scal_D^3)^{-1} (\Scal_D^0+  k_b \Scal_D^1 + k_b^2 \Scal_D^2+k_b^3 \Scal_D^3)[\phi]+O(\delta^2)\\
\ds& =\phi+(\Scal_D^0+  k \Scal_D^1 +k^2\Scal_D^2)^{-1} (  (k_b-k) \Scal_D^1 + (k_b^2 -k^2) \Scal_D^2+ (k_b^3 -k^3) \Scal_D^3)[\phi]+O(\delta^2)\\
\ds &=\phi+(\Scal_D^0+  k \Scal_D^1)^{-1}( (k_b^2 -k^2) \Scal_D^2+ (k_b^3 -k^3) \Scal_D^3)[\psi_1-\psi_2]+\delta(k_b -k) (\Scal_D^0)^{-1}\Scal_D^1[\phi_1]+O(\delta^2)\\
\ds &=\phi+(k_b^2 -k^2) (\Scal_D^0)^{-1} \Scal_D^2[\psi_1-\psi_2] + (k_b^3 -k^3) (\Scal_D^0)^{-1} \Scal_D^3[\psi_1-\psi_2] \\
\ds &\quad -k(k_b^2- k^2)(\Scal_D^0)^{-1}\Scal_D^1(\Scal_D^0)^{-1}\Scal_D^2[\psi_1-\psi_2]  +\delta(k_b -k) (\Scal_D^0)^{-1}\Scal_D^1[\phi_1]+O(\delta^2).
\end{align*}
Plugging this into the second equation of \eqnref{asymp_int_eq3} and equating terms of the same order of $\delta$, $\delta^{3/2}$, $\delta^2$, $\delta^{5/2}$, we arrive at
\begin{itemize}
\item $O(\delta)$-terms:
\begin{align}
 \left(-\frac{1}{2}I+\mathcal{K}_{ D}^{0,* }\right) [\phi_1]+ \eta_0^2 v_b^{-2}\Kcal_D^2[\psi_1-\psi_2]  -(\psi_1-\psi_2)=0, \label{phi1_eq}
  \end{align}
whence $\phi_1$ is uniquely determined. 
 
 \item  $O(\delta^{3/2})$-terms:
\begin{align}
 \left(-\frac{1}{2}I+\mathcal{K}_{ D}^{0,* }\right) [\phi_2]+ \eta_0^3 v_b^{-3} \Kcal_D^3[\psi_1-\psi_2]=0, \label{phi2_eq}
 \end{align}
whence $\phi_2$ is uniquely determined. 
 
\item $O(\delta^2)$-terms:
 \begin{align*}
 &  \left(-\frac{1}{2}I+\mathcal{K}_{ D}^{0,* } \right)[\phi_3]+ 2\eta_0\eta_1 v_b^{-2} \Kcal_D^2[ \psi_1-\psi_2] + \eta_0^2 v_b^{-2} \Kcal_D^2[ \phi_1] +\eta_0^4 v_b^{-4} \Kcal_D^4[\psi_1-\psi_2] \\
& -\left (\frac{1}{2}I+\mathcal{K}_{ D}^{0,*}\right)[ \eta_0^2(v_b^{-2} - v^{-2}) (\Scal_D^0)^{-1}\Scal_D^2[\psi_1-\psi_2] +\phi_1] - v^{-2} \eta_0^2 \Kcal_D^2 [\psi_1-\psi_2]=0
\end{align*}

\item $O(\delta^{5/2})$-terms:
\begin{align*}
&   \left(-\frac{1}{2}I+\mathcal{K}_{ D}^{0,* } \right)[\phi_4]+2\eta_0\eta_2 v_b^{-2} \Kcal_D^2[\psi_1-\psi_2]+\eta_0^2v_b^{-2} \Kcal_D^2 [\phi_2]+\eta_0^3 v_b^{-3} \Kcal_D^3 [\phi_1]\\
&+ 3 \eta_0^2\eta_1  v_b^{-3} \Kcal_D^3[\psi_1-\psi_2]+\eta_0^5v_b^{-5} \Kcal_D^5 [\psi_1-\psi_2]\\
&- \eta_0^3 \left(\frac{1}{2}I+\mathcal{K}_{ D}^{0,* } \right) \bigg[ (v_b^{-3} -v^{-3}) (\Scal_D^0)^{-1} \Scal_D^3[\psi_1-\psi_2] \\
& -v^{-1}(v_b^{-2} - v^{-2})(\Scal_D^0)^{-1}\Scal_D^1(\Scal_D^0)^{-1}\Scal_D^2[\psi_1-\psi_2] \bigg]\\
 & - \eta_0^3 \left(\frac{1}{2}I+\mathcal{K}_{ D}^{0,* } \right) \left [ (v_b^{-1} -v^{-1}) (\Scal_D^0)^{-1}\Scal_D^1[\phi_1]  \right] - \left(\frac{1}{2}I+\mathcal{K}_{ D}^{0,* } \right)[\phi_2]-\eta_0^3 v^{-3} \Kcal_D^3[\psi_1-\psi_2]=0.
 \end{align*}
\end{itemize}

Step 5. We consider the terms of order $O(\delta^2)$. 
Integrating over $\p D_1 - \p D_2$ and using Lemma \ref{lem-trick}, we get
\begin{align}
-2\eta_0\eta_1 v_b^{-2}  +  \int_{\p D_1 -\p D_2} \left(\eta_0^2 v_b^{-2}\Kcal_D^2[\phi_1] - \phi_1\right)+\eta_0^4 v_b^{-4} \int_{\p D_1-\p D_2}\Kcal_D^4[\psi_1-\psi_2] \nonumber\\
-\eta_0^2 (v_b^{-2} - v^{-2})\int_{\p D_1 -\p D_2} (\Scal_D^0)^{-1}\Scal_D^2[\psi_1-\psi_2] +  \eta_0^2 v^{-2}=0 .\label{eta1_eq}
\end{align}
It follows that $\eta_1$ is uniquely determined from the above equation. Moreover, we can check that $\eta_1$ is a real number. 
%
%
%

Step 6. Finally, we consider the $\delta^{5/2}$ terms. 
Integrating  over $\p D_1 -\p D_2$ and using Lemma \ref{lem-trick} again, we get
\begin{align}
&-4\eta_0\eta_2 v_b^{-2} +\int_{\p D_1 -\p D_2} \eta_0^2v_b^{-2} \Kcal_D^2 [\phi_2]-\phi_2+\eta_0^3 v_b^{-3} \Kcal_D^3 [\phi_1] \nonumber\\
&+ \int_{\p D_1 - \p D_2} \eta_0^5v_b^{-5} \Kcal_D^5 [\psi_1-\psi_2]-\eta_0^3(v_b^{-3} -v^{-3}) (\Scal_D^0)^{-1} \Scal_D^3[\psi_1-\psi_2] =0.\label{eta2_eq}
\end{align}
Therefore, $\eta_2$ is uniquely determined. We also note that $\eta_2$ is a purely imaginary number. Indeed, from \eqnref{phi1_eq}, \eqnref{phi2_eq}, we see that $\Im \phi_1=0, \Re \phi_2=0$. This combined with the integral representation of $\Kcal_D^2, \Kcal_D^3, \Kcal_D^5$ yields the desired result. 

%

Thus we conclude that the resonance frequency $\omega_2$ has the following asymptotic expansion:  
\begin{equation}
\omega_2= \sqrt{\delta v_b^2 (C_{11}-C_{12})}  + \delta^{3/2} \hat\eta_1 + i\delta^2 \hat\eta_2 + O(\delta^{5/2}),
\end{equation}
where $\hat\eta_1:=\eta_1$ and $\hat\eta_2:= -i \eta_2$ are real numbers given by \eqnref{eta1_eq} and \eqnref{eta2_eq}, respectively.\end{proof}

\begin{remark}
The above approach is applicable to a general bubble dimer which may consist of two non-identical spherical bubbles. 
\end{remark}

\section{The point scatterer approximation of  bubbles} \label{sec:approx} 
In this section we prove an approximate formula for the solution $u$ to the scattering problem for the bubble dimer $D=D_1\cup D_2$.  

We need the following lemma which can be proved using a simple symmetry argument.
\begin{lemma}
We have
$$ 
\int_{\partial D} y_2(\psi_1-\psi_2)d\sigma(y)=\int_{\partial D} y_3(\psi_1-\psi_2)d\sigma(y)=0,
$$ 
while 
$$
\int_{\partial D} y_1(\psi_1-\psi_2)d\sigma(y)= P, 
$$
for some nonzero real number $P$. 
\end{lemma}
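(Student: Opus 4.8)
The plan is to work with the single layer potential $u:=\mathcal{S}_D^0[\psi_1-\psi_2]$. By the very definition of $\psi_1$ and $\psi_2$ one has $u\equiv 1$ on $\partial D_1$, $u\equiv -1$ on $\partial D_2$; moreover $u$ is harmonic in $\RR^3\setminus\partial D$, continuous across $\partial D$, and $u(\Bx)=O(|\Bx|^{-1})\to 0$ as $|\Bx|\to\infty$. The two vanishing identities will follow from the mirror symmetry of the dimer across the coordinate planes containing the $x_1$-axis, while the only genuinely non-trivial point — that $P\neq 0$ — will be obtained from a sign argument for $u$ based on the maximum principle, using crucially that the two balls are disjoint.

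For the vanishing of the $y_2$- and $y_3$-moments, I would first record that the reflections $R_2,R_3$ across $\{x_2=0\}$ and $\{x_3=0\}$ fix each ball $D_j$ setwise (their centres lie on the $x_1$-axis), so $R_i(D)=D$ for $i=2,3$. Since $|R_i\Bx-R_i\By|=|\Bx-\By|$, one has the intertwining relation $(\mathcal{S}_D^0[\varphi])\circ R_i=\mathcal{S}_D^0[\varphi\circ R_i]$. Applying this to $\psi_1$ and $\psi_2$, and using that $R_i$ preserves $\partial D_1$ and $\partial D_2$ individually, gives $\mathcal{S}_D^0[\psi_j\circ R_i]=\mathcal{S}_D^0[\psi_j]$, hence $\psi_j\circ R_i=\psi_j$ by invertibility of $\mathcal{S}_D^0:L^2(\partial D)\to H^1(\partial D)$. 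Thus $\psi_1-\psi_2$ is invariant under $R_2$ and $R_3$, and the change of variables $\By\mapsto R_2\By$ in $\int_{\partial D}y_2(\psi_1-\psi_2)\,d\sigma$ (which preserves $\partial D$ and $d\sigma$, while $(R_2\By)_2=-y_2$) turns that integral into its own negative; likewise for $y_3$. Hence both vanish.

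For $P$, since $u$ is harmonic in each $D_j$ and constant on $\partial D_j$, it is constant in $D_j$, so $\partial_\nu u|_-=0$ on $\partial D$, and the jump relation \eqnref{jump1} gives $\psi_1-\psi_2=\partial_\nu u|_+-\partial_\nu u|_-=\partial_\nu u|_+$ on $\partial D$. By the maximum principle for the exterior harmonic function $u$ (with $u=\pm1$ on $\partial D$ and $u\to 0$ at infinity) one has $-1\le u\le 1$ in $\RR^3\setminus\overline D$; comparing the value $u(p)=1$ at $p\in\partial D_1$ with the nearby exterior values $u(p+t\nu(p))\le 1$ as $t\to0^+$ yields $\partial_\nu u|_+(p)\le 0$, i.e. $\psi_1-\psi_2\le 0$ on $\partial D_1$, and symmetrically $\psi_1-\psi_2\ge 0$ on $\partial D_2$. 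Placing the two balls of radius $r_0$ with centres at $(\mp(r_0+d_0/2),0,0)$, disjointness ($d_0>0$) forces $y_1\le -d_0/2$ on $\partial D_1$ and $y_1\ge d_0/2$ on $\partial D_2$, so $y_1(\psi_1-\psi_2)\ge 0$ on all of $\partial D$ and
\[
P=\int_{\partial D}y_1(\psi_1-\psi_2)\,d\sigma\ \ge\ \frac{d_0}{2}\Bigl(-\!\int_{\partial D_1}(\psi_1-\psi_2)\,d\sigma\Bigr)=\frac{d_0}{2}\,(C_{11}-C_{12})>0,
\]
where I used $\int_{\partial D_1}(\psi_1-\psi_2)\,d\sigma=-(C_{11}-C_{12})$ from the definition of the capacitance matrix together with $C_{11}>0>C_{12}$. (With the balls labelled the other way one gets $P<0$; in either case $P\neq 0$.)

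The routine ingredient is the symmetry step for the $y_2$- and $y_3$-moments. The main obstacle is the strict non-vanishing of $P$: a symmetry argument by itself cannot deliver it, since the only reflection swapping $D_1$ and $D_2$ (which makes $\psi_1-\psi_2$ odd along $x_1$) is precisely the one under which the $y_1$-moment is invariant, not skew. What rescues the argument is that $\psi_1-\psi_2=\partial_\nu u|_+$ has a constant sign on each $\partial D_j$ — the maximum-principle input — so that once the balls are separated the integrand $y_1(\psi_1-\psi_2)$ does not change sign, and the lower bound then only needs the nondegeneracy $C_{11}\neq C_{12}$ of the capacitance. Equivalently, $\mathbf{p}:=\int_{\partial D}\By(\psi_1-\psi_2)\,d\sigma=(P,0,0)$ is the dipole coefficient in the far-field expansion $u(\Bx)=-\frac{\Bx\cdot\mathbf{p}}{4\pi|\Bx|^3}+O(|\Bx|^{-3})$ (the monopole coefficient $\int_{\partial D}(\psi_1-\psi_2)$ vanishing by the same computation), and the sign information above is exactly what forces this dipole to be non-zero.
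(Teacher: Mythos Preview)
Your argument is correct. The symmetry step for the $y_2$- and $y_3$-moments is exactly what the paper has in mind when it says the lemma ``can be proved using a simple symmetry argument.'' For the non-vanishing of $P$, however, the paper does not really give a proof: symmetry alone only tells you the $y_1$-moment is real (the reflection $y_1\mapsto -y_1$ swaps $\psi_1$ and $\psi_2$ and negates $y_1$, leaving the integral fixed), not that it is nonzero. The paper's implicit justification is the explicit bispherical-coordinate formula for $P$ stated in the Remark following Theorem~\ref{thm:point_and_dipole_approx}, which is manifestly nonzero (in fact negative, corresponding to the labeling with $D_1$ on the right).

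Your route is genuinely different and more intrinsic: you identify $\psi_1-\psi_2$ with the exterior normal derivative of the harmonic equilibrium potential $u=\mathcal S_D^0[\psi_1-\psi_2]$, use the maximum principle to force a constant sign on each component, and then exploit the gap $d_0>0$ between the balls to bound $P$ below by $\frac{d_0}{2}(C_{11}-C_{12})>0$ (or the negative of that, depending on which ball is labeled $D_1$). This avoids the bispherical computation entirely and would work for any pair of disjoint smooth bodies lying in opposite half-spaces $\{y_1<0\}$ and $\{y_1>0\}$, not just balls. The paper's approach, by contrast, gives the exact value of $P$ but only for the spherical dimer.
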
 
 
 In the next theorem, we prove that the bubble dimer can be approximated as a point scatterer with monopole and dipole modes. 
\begin{theorem} \label{thm:point_and_dipole_approx}
For $\omega=O(\delta^{1/2})$ and a given plane wave $u^{in}$, the solution $u$ to \eqnref{HP1} can be approximated as $\delta \rightarrow 0$ by 
\begin{equation}
u(\Bx)-u^{in}(\Bx)=  g^0(\omega)u^{in}(0) G(\Bx, k) +\nabla u^{in}(0)\cdot g^1(\omega) \nabla G(\Bx,  k) +O(\delta|x|^{-1}),
\end{equation}
when $|x|$ is sufficiently large, where
\begin{align}
&g^0(\omega):=\frac{C(1,1)}{1- \omega_1^2/\omega^2}(1+O(\delta^{1/2})),\\
&g^1(\omega)=(g^1_{ij}(\omega)),~g^1_{ij}(\omega):= \int_{\partial D}  (\mathcal{S}_D^0)^{-1}[x_i](y) y_j + \frac{\delta v_b^2}{|D|(\omega_2^2-\omega^2)} P^2\delta_{i,1}\delta_{j,1}.
\end{align}
\end{theorem}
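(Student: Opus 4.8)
The plan is to combine a multipole expansion of the scattered field $u-u^{in}=\Scal_D^k[\psi]$ in the far field with an asymptotic resolution of the integral system \eqnref{phipsi}, closely following the scheme used to prove Theorem \ref{sphere_res}.

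\emph{Far-field reduction.} Taylor expanding the kernel $G(\Bx-\By,k)$ in $\By$ about the origin gives, for $|\Bx|$ large,
\[
\Scal_D^k[\psi](\Bx)=\Big(\int_{\p D}\psi\Big)G(\Bx,k)-\Big(\int_{\p D}\By\,\psi(\By)\,d\sigma(\By)\Big)\cdot\nabla G(\Bx,k)+R(\Bx),
\]
where, since $|\nabla^2 G(\Bx,k)|\lesssim k^2|\Bx|^{-1}+|\Bx|^{-3}$ and $k^2=O(\omega^2)=O(\delta)$, the remainder obeys $R(\Bx)=O\big((\delta|\Bx|^{-1}+|\Bx|^{-3})\,\|\psi\|_{L^1(\p D)}\big)$, which is $O(\delta|\Bx|^{-1})$ once $\|\psi\|_{L^2(\p D)}=O(1)$ and $|\Bx|\gtrsim\delta^{-1/2}$. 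So it suffices to establish $\int_{\p D}\psi=g^0(\omega)u^{in}(0)+O(\delta)$ and $\int_{\p D}\By\,\psi(\By)\,d\sigma(\By)=-g^1(\omega)\nabla u^{in}(0)+O(\delta)$; symmetry of the matrix $g^1$ (which follows from the self-adjointness of $\Scal_D^0$, making $\int_{\p D}(\Scal_D^0)^{-1}[x_i]y_j$ symmetric) then lets one rewrite $-(\int_{\p D}\By\psi)\cdot\nabla G=\nabla u^{in}(0)\cdot g^1\nabla G$, matching the claimed form.

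\emph{Asymptotic resolution of \eqnref{phipsi}.} Using \eqnref{layer_asymp}, $k=O(\omega)$, $\delta\ll1$ and $u^{in}(\By)=u^{in}(0)+\By\cdot\nabla u^{in}(0)+O(\omega^2)$, I eliminate $\psi$ from the first equation of \eqnref{phipsi},
\begin{align*}
\psi&=\phi-(\Scal_D^0)^{-1}[u^{in}]+\tfrac{k_b-k}{4\pi i}(\psi_1+\psi_2)\int_{\p D}\phi+O(\omega^2),\\
(\Scal_D^0)^{-1}[u^{in}]&=u^{in}(0)(\psi_1+\psi_2)+\textstyle\sum_i\partial_i u^{in}(0)(\Scal_D^0)^{-1}[y_i]+O(\omega^2),
\end{align*}
and substitute into the second. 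The leading equation reads $(-\tfrac12 I+\Kcal_D^{0,*})[\phi]=O(\delta)$, so, as in \eqnref{phi_decomp}, $\phi=a\psi_1+b\psi_2+\phi^\perp$ with $\|\phi^\perp\|_{L^2(\p D)}=O(\delta)$ and, by Lemma \ref{lem:trick_layer}(i), $\int_{\p D_j}\phi^\perp=0$. Integrating the substituted second equation over $\p D_j$, $j=1,2$, and invoking Lemma \ref{lem:trick_layer} yields the inhomogeneous analogue of \eqnref{sys_eq_ball}: a $2\times2$ system for $(a,b)$ with matrix $-\tfrac{\omega^2}{v_b^2}I+\delta\left(\begin{smallmatrix}C_{11}&C_{12}\\C_{12}&C_{11}\end{smallmatrix}\right)+O(\delta^{3/2})$ and right-hand side $(F_1,F_2)$ built from the $u^{in}$-terms. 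Passing to the eigenbasis $(1,1),(1,-1)$ decouples it into the symmetric amplitude $s=(a+b)/2$, with denominator $\delta v_b^2(C_{11}+C_{12})-\omega^2=\omega_1^2-\omega^2+O(\delta^{3/2})$, and the antisymmetric amplitude $t=(a-b)/2$, with denominator $\omega_2^2-\omega^2+O(\delta^{3/2})$.

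\emph{Identification of $g^0$ and $g^1$.} For $s$ the leading forcing is $O(1)$, coming from $-\delta(\tfrac12 I+\Kcal_D^{0,*})[u^{in}(0)(\psi_1+\psi_2)]$ through Lemma \ref{lem:trick_layer}(ii); solving and using $\int_{\p D}\psi_j=-(C_{11}+C_{12})$, $\int_{\p D}\phi^\perp=0$ and $\int_{\p D}(\Scal_D^0)^{-1}[u^{in}]=-u^{in}(0)C(1,1)+O(\delta)$ gives $\int_{\p D}\psi=\tfrac{C(1,1)}{1-\omega_1^2/\omega^2}u^{in}(0)(1+O(\delta^{1/2}))+O(\delta)=g^0(\omega)u^{in}(0)+O(\delta)$. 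For the dipole moment, the term $-(\Scal_D^0)^{-1}[u^{in}]$ in $\psi$ contributes the nonresonant piece $-\sum_i\partial_i u^{in}(0)\int_{\p D}(\Scal_D^0)^{-1}[y_i]y_j$, which is the first term of $g^1_{ij}$, while $\int_{\p D}y_j\phi=(a-b)\int_{\p D}y_1\psi_1\,\delta_{j1}+O(\delta)=\tfrac12(a-b)P\,\delta_{j1}+O(\delta)$, using $\int_{\p D}y_1(\psi_1-\psi_2)=P$ and $\int_{\p D}y_2(\psi_1-\psi_2)=\int_{\p D}y_3(\psi_1-\psi_2)=0$ (so the resonant part appears only for $i=j=1$). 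The crucial point is $F_1-F_2$: the direct antisymmetric forcing $\delta\int_{\p D_1-\p D_2}\partial u^{in}/\partial\nu=-\delta k^2\int_{D_1-D_2}u^{in}=O(\delta^{5/2})$ is negligible, and the leading antisymmetric forcing is the $O(\delta^{3/2})$ term $-\delta\int_{\p D_1-\p D_2}(\Scal_D^0)^{-1}[\By\cdot\nabla u^{in}(0)]=-\delta P\,\partial_1 u^{in}(0)$, where Lemma \ref{lem-trick}(iii) is exactly what identifies the coefficient as $P$. Hence $t=-\tfrac{\delta v_b^2 P\,\partial_1 u^{in}(0)}{\omega_2^2-\omega^2}(1+O(\delta^{1/2}))$, and assembling gives $\int_{\p D}y_j\psi=-\sum_i\partial_i u^{in}(0)g^1_{ij}+O(\delta)$ with $g^1_{ij}$ as stated. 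Finally $\|\psi\|_{L^2(\p D)}=O(1)$ for $\omega=O(\delta^{1/2})$ bounded away from $\omega_1,\omega_2$, which closes the far-field reduction.

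\emph{Main difficulty.} The delicate part is the dipole moment, and in particular the antisymmetric mode: the naive antisymmetric forcing from $\partial u^{in}/\partial\nu$ vanishes to leading order by the divergence theorem, so the entire $x_1$-dipole response — hence the resonant enhancement near the anti-resonance $\omega_2$ — is an $O(\delta^{3/2})$ effect that surfaces only through the coupling between the two equations of \eqnref{phipsi} (the linear part of $u^{in}$ entering $\psi$ via the first equation and then the $-\delta(\tfrac12 I+\Kcal_D^{0,*})[\psi]$ term), and one has to use Lemma \ref{lem-trick}(iii) to recognize its coefficient as $P$. The remaining effort is careful bookkeeping of the powers of $\delta^{1/2}$ — notably the $i\omega^3$ radiation term and the $O(\delta^{3/2})$ correction to $\omega_1^2$, which are precisely what is absorbed into the factor $1+O(\delta^{1/2})$ in $g^0$ — and checking that every remainder is uniform for $\omega=O(\delta^{1/2})$ away from the two hybridized resonances.
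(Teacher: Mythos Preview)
Your proposal is correct and follows essentially the same strategy as the paper: eliminate $\psi$ via the first equation, reduce to a perturbed $(-\tfrac12 I+\Kcal_D^{0,*})$-equation for $\phi$, project onto the two-dimensional kernel by integrating over $\p D_j$, and combine with a far-field Taylor expansion of $G(\Bx-\By,k)$. The only organizational differences are that the paper decomposes $\phi$ along the exact singular functions $\phi_1,\phi_2$ of $\mathcal{C}_\delta^\omega$ (using $\mathcal{C}_\delta^{\omega_j}[\phi_j]=0$) rather than along $\psi_1,\psi_2$ with subsequent diagonalization, and it performs the far-field expansion on $\Scal_D^k[\phi_j]$ at the end rather than reducing to moments of $\psi$ at the outset---both are equivalent, and your identification of the $O(\delta^{3/2})$ antisymmetric forcing via Lemma~\ref{lem-trick}(iii) is exactly the key step the paper uses.
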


\begin{proof} Step 1. 
 Let $(\phi,~\psi)$ be the solution to \eqnref{phipsi}. Using the asymptotic expansions \eqnref{layer_asymp}, we have 
  \begin{equation}\label{asymp_int_eq_inhom}\begin{cases}
& \ds \Scal_D^0[\phi - \psi]+  k_b \Scal_D^1[\phi] -  k \Scal_D^1[\psi] = u^{in}+O(\delta),\\
& \ds  \left(-\frac{1}{2}I+\mathcal{K}_{ D}^{0,* } + k_b^2 \mathcal{K}_{D}^{2 } +k_b^3\mathcal{K}_D^3\right)[\phi]-\delta\left (\frac{1}{2}I+\mathcal{K}_{ D}^{0,*}\right) [\psi] = \delta \pd{u^{in}}{\nu}+O(\delta^2),
  \end{cases}\end{equation}
where the remainders $O(\delta)$ and $O(\delta^2)$ are in the operator norm.

From the first equation of \eqnref{asymp_int_eq_inhom}, and following the same approach used to derive \eqnref{plugint}, we obtain
 \begin{equation}\label{psi_eq}
 \psi=\phi+\frac{(k_b -k)}{4\pi i} (\psi_1+\psi_2)\int_{\partial D} \phi - (\mathcal{S}_D^0)^{-1}[u^{in}]+O(\delta).
 \end{equation}
Plugging \eqnref{psi_eq} into the second equation of \eqnref{asymp_int_eq_inhom}, we get
 \begin{equation}\label{approx_eq_inhom2}
\mathcal{C}_\delta^\omega[\phi]=  -\delta\left (\frac{1}{2}I+\mathcal{K}_{ D}^{0,*}\right)  (\mathcal{S}_D^0)^{-1}[u^{in}]+ \delta \pd{u^{in}}{\nu} +O(\delta^2),
 \end{equation} 
where $\mathcal{C}_\delta^\omega$ is defined by
 \begin{align*} 
 \mathcal{C}_\delta^\omega[\phi]&:=\left(-\frac{1}{2}I+\mathcal{K}_{ D}^{0,* }[\phi]\right) + \left(k_b^2 \mathcal{K}_{D}^{2 } + k_b^3 \mathcal{K}_{D}^{3 }-\delta\left (\frac{1}{2}I+\mathcal{K}_{ D}^{0,*}\right) \right)[\phi]
 \\
 &\qquad -\frac{\delta(k_b -k)}{4\pi i} (\psi_1+\psi_2)\int_{\partial D} \phi.
 \end{align*}
 Note that $\mathcal{C}_\delta^\omega[\phi]$ is equal to the left-hand side of 
\eqnref{approx_eq}.

Step 2.  
Using the Taylor expansion of $u^{in}$ at the origin, and the fact that $\nabla u^{in} = O(\omega) = O(\delta^{1/2})$ and $\nabla^2 u^{in} = O(\omega^2) = O(\delta)$, the right-hand side of \eqnref{approx_eq_inhom2} can be approximated by
 \begin{align*}
& -\delta\left (\frac{1}{2}I+\mathcal{K}_{ D}^{0,*}\right)  (\mathcal{S}_D^0)^{-1}[u^{in}](\Bx)+ \delta \pd{u^{in}}{\nu}(\Bx)\\
 &= -\delta\left (\frac{1}{2}I+\mathcal{K}_{ D}^{0,*}\right)  (\mathcal{S}_D^0)^{-1}[u^{in}(0)+ \nabla u^{in}(0)\cdot \By](\Bx)+ \delta\nabla (u^{in}(0)) \cdot \nu(\Bx)+O(\delta^{3/2})\\
 &=  -\delta u^{in}(0) (\psi_1 +\psi_2) - \delta  (\mathcal{S}_D^0)^{-1}[\nabla u^{in}(0)\cdot \By] + 0 + O(\delta^{3/2}).
 \end{align*}
Therefore, \eqnref{approx_eq_inhom2} becomes
 \begin{equation}\label{asym_main_eq}
 \mathcal{C}_\delta^\omega[\phi]=-\delta u^{in}(0) (\psi_1 +\psi_2) - \delta  (\mathcal{S}_D^0)^{-1}[\nabla u^{in}(0)\cdot \By] + O(\delta^{3/2}).
 \end{equation}

Step 3. 
In the statement and proof of Theorem \ref{sphere_res}, we have verified that the characteristic values of $\mathcal{C}_\delta^\omega$ are given by \eqnref{eq-omega1} and \eqnref{eq-omega2}, and that their corresponding singular functions are $\phi_1$, $\phi_2$, i.e.,
 $$ \mathcal{C}_\delta^{\omega_1}[\phi_1]= \mathcal{C}_\delta^{\omega_2}[\phi_2]=0.$$
Recall from \eqnref{phi_decomp} and \eqnref{ab_asymp} that
$$ \phi_1= \psi_1+\psi_2 + O(\delta), ~ \phi_2=\psi_1-\psi_2+ O(\delta).$$ 

We decompose the solution $\phi\in L^2(\p D)$ to \eqnref{asym_main_eq} as
\begin{equation}\label{a_phi1_b_phi2_phi3}
\phi= a\phi_1 + b\phi_2 + \phi_3
\end{equation}
with $\langle \phi_1,\,\phi_3 \rangle=0$ and $\langle \phi_2,\,\phi_3 \rangle=0$, where $\langle \;,\,\; \rangle$ denotes the $L^2$-inner product on $\p D$.

We have
\begin{align}\label{asym_main_eq2}
&
a(\mathcal{C}_\delta^\omega-\mathcal{C}_\delta^{\omega_1})[\phi_1]+b (\mathcal{C}_\delta^\omega-\mathcal{C}_\delta^{\omega_2})[\phi_2] + \mathcal{C}_\delta^\omega[\phi_3]
\nonumber
\\
&
\qquad \qquad
=  -\delta u^{in}(0) (\psi_1 +\psi_2) - \delta  (\mathcal{S}_D^0)^{-1}[\nabla u^{in}(0)\cdot \By].
\end{align}

Since 
\begin{align*}
(\mathcal{C}_\delta^\omega-\mathcal{C}_\delta^{\omega_j})[\phi_j]=\frac{\omega^2-\omega_j^2}{v_b^2} \mathcal{K}_D^2[\phi_j]+ O(\delta^{3/2}),\quad j=1,2,\end{align*}
we have
$\|(\mathcal{C}_\delta^\omega-\mathcal{C}_\delta^{\omega_j})[\phi_j] \|=O(\delta)$, and hence we conclude from  \eqnref{asym_main_eq2} that
\begin{equation}\label{est1} \| \phi_3 \|= O((|a|+|b|+1)\delta).\end{equation}

Integrating \eqnref{asym_main_eq2} over $\partial D$ and then using Lemma \ref{lem:trick_layer}, we obtain
\begin{equation}\label{int1}
-a\left( \frac{\omega^2-\omega_1^2}{v_b^2} |D|+ O(\delta^{3/2})\right) + bO(\delta^2)+ \| \phi_3\| O(\delta) = 2\delta u^{in}(0) (C_{11}+C_{12}).
\end{equation}
Similarly, by integrating \eqnref{asym_main_eq2} over $\partial D_1 - \partial D_2$, we have 
\begin{equation}\label{int2}
a O(\delta^2)-b\left( \frac{\omega^2-\omega_2^2}{v_b^2} |D|+ O(\delta^{3/2})\right) + \| \phi_3\| O(\delta) = -\delta \int_{\partial D_1 - \partial D_2} (\mathcal{S}_D^0)^{-1}[\nabla u^{in}(0)\cdot \By].
\end{equation}
We observe that
\begin{equation}\label{a_b_phi3_estim}
a=O(1), \ ~b=O(\delta^{1/2}), \ ~\| \phi_3 \|= O(\delta).
\end{equation}

By solving \eqnref{int1} and \eqnref{int2} for $a$ and $b$, and then using \eqnref{eq-omega1}, we get
\begin{align}
&a=- \frac{2\delta v_b^2 (C_{11}+C_{12})}{|D|(\omega^2-\omega_1^2)} (u^{in}(0)+O(\delta))= - \frac{\omega_1^2+O(\delta^{3/2})}{\omega^2-\omega_1^2}  (u^{in}(0)+O(\delta)),\label{eq-a-asymp}\\
&b= \frac{\delta v_b^2}{|D|(\omega^2 - \omega_2^2)} \left (  \int_{\partial D_1 - \partial D_2} (\mathcal{S}_D^0)^{-1}[\nabla u^{in}(0)\cdot \By]+O(\delta)\right).\label{eq-b-asymp}
\end{align}

Now we can calculate $\psi$. By using \eqnref{psi_eq}, \eqnref{a_b_phi3_estim}, \eqnref{eq-a-asymp} and \eqnref{eq-b-asymp}, we obtain
\begin{align*}
\psi&= a(1+O(\delta^{1/2}))\phi_1+ b\phi_2+ \phi_3 - (\mathcal{S}_D^0)^{-1}[u^{in}(0)+ \nabla u^{in}(0)\cdot \By]+O(\delta)\nonumber\\
&=- \frac{\omega_1^2+O(\delta^{3/2})}{\omega^2-\omega_1^2}  (u^{in}(0)+O(\delta))(1+O(\delta^{1/2}))\phi_1\nonumber - u^{in}(0) \phi_1
\\
&\quad + b\phi_2  + (\mathcal{S}_D^0)^{-1}[\nabla u^{in}(0)\cdot \By]+O(\delta)
\nonumber
\\
&= \frac{\omega^2+O(\delta^{3/2})}{\omega_1^2-\omega^2}  (u^{in}(0)+O(\delta)) (1+O(\delta^{1/2})) \phi_1 +b\phi_2  -\nabla u^{in}(0)\cdot  (\mathcal{S}_D^0)^{-1}[\By] +O(\delta).
\end{align*}

Step 4.  
Finally, we consider the scattered field $u^s(\Bx):= u(\Bx)-u^{in}(x)=\Scal_D^{ k}[\psi](\Bx)$. It is enough to consider $\Scal_D^{ k}[\phi_1]$ and $\Scal_D^{ k}[\phi_2]$.

Note that $$G(\Bx- \By, k) = G(\Bx, k)-\nabla G(\Bx, k)\cdot \By + O(\delta |x|^{-1}).$$
Note also that, due to the symmetry of $D_1$ and $D_2$, we have
$$
\int_{\partial D}  y (\psi_1+\psi_2)(\By) d\sigma(\By) = 0, 
\quad \int_{\p D} \psi_1 - \psi_2 = 0.
$$
Therefore, for sufficiently large $|x|$, we have
\begin{align*}
\Scal_D^{ k}[\phi_1](\Bx) &= \Scal_D^{ k}[\psi_1+\psi_2](\Bx)+O(\delta |\Bx|^{-1})\\
&=\int_{\partial D} G(\Bx-\By, k) (\psi_1+\psi_2)(\By) d\sigma(\By)+O(\delta |\Bx|^{-1})\\
&=\int_{\partial D} G(\Bx, k) (\psi_1+\psi_2)(\By) d\sigma(\By) 
\\
&\quad -\nabla G(\Bx, k)\cdot \int_{\partial D}  y (\psi_1+\psi_2)(\By) d\sigma(\By)
  +O(\delta |\Bx|^{-1}),
\\
&=-2(C_{11}+C_{12}) G(\Bx, k) + 0 + O(\delta|\Bx|^{-1} ).
\end{align*}
Similarly, we have 
\begin{align*}
\Scal_D^{ k}[\phi_2](\Bx) &= \Scal_D^{ k}[\psi_1-\psi_2](\Bx)+O(\delta|x|^{-1})\\
&= \int_{\partial D} G(\Bx-\By, k) (\psi_1-\psi_2)(\By) d\sigma(\By)+O(\delta |x|^{-1})\\
&=-\left(\int_{\partial D} (\psi_1-\psi_2)\By\right)\cdot\nabla G(\Bx, k)  + O(\delta|x|^{-1}).
\end{align*}
 The proof is then complete. \end{proof}
 
\begin{cor}
For the rescaled bubble dimer $sR_dD$ with size $s$ and orientation $d$, where $R_d$ represents a rotation transform which aligns the bubble dimer $D$ in the direction $d$, we have
\begin{align}
\omega_1(\delta, sR_dD) &= \frac{1}{s} \omega_1(\delta, D), \\
\omega_2(\delta, sR_dD) &= \frac{1}{s} \omega_2(\delta, D),\\
g^0(\omega, \delta, sR_dD) &= \frac{2 (C_{11}+C_{12})s}{1- \omega_1(\delta,s R_d D)^2/\omega^2}(1+O(\delta^{1/2})) \label{sc_func_monopole} \\
g^1(\omega, \delta, sR_dD)&= s^3 \int_{\partial D}  (\mathcal{S}_D^0)^{-1}[(R_d x)_i](y) (R_d y)_j + \frac{\delta v_b^2s}{|D|(\omega_2(\delta,s R_d D)^2-\omega^2)} P^2d_id_j. \label{sc_func_dipole}
\end{align}
\end{cor}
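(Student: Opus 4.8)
The plan is to obtain all four identities by a scaling (and rotation) argument, pushing the rescaled problem back to the normalized dimer $D$ through a change of variables in the integral equation \eqnref{phipsi} and then invoking Theorems \ref{sphere_res} and \ref{thm:point_and_dipole_approx}. First I would record how the layer potentials transform under the dilation $\Bx \mapsto s\Bx$ composed with the rotation $R_d$: writing $D' = sR_dD$, one has $\mathcal{S}_{D'}^{k}[\phi](s R_d \Bx) = s\,\mathcal{S}_{D}^{sk}[\phi \circ sR_d](\Bx)$ (because $G(s\Bx,k) = s^{-1}G(\Bx,sk)$ up to the obvious rotation invariance), and similarly $\mathcal{K}_{D'}^{k,*}[\phi](sR_d\Bx) = \mathcal{K}_{D}^{sk,*}[\phi\circ sR_d](\Bx)$, so that $\mathcal{A}_\delta^{\omega}$ for $D'$ at frequency $\omega$ and wave speed data $(v,v_b)$ is conjugate (via the pullback by $sR_d$) to $\mathcal{A}_\delta^{\omega'}$ for $D$ with $\omega' = s\omega$ — note $\delta = \rho_b/\rho$ is unchanged and $k = \omega v^{-1}$ scales to $s\omega v^{-1}$. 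In particular the characteristic-value equation \eqnref{eq_A_delta_omega_zero} for $D'$ at $\omega$ is equivalent to that for $D$ at $s\omega$, which immediately gives $\omega_j(\delta, sR_dD) = \tfrac{1}{s}\,\omega_j(\delta, D)$ for $j=1,2$, i.e. the first two identities.

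For the monopole coefficient \eqnref{sc_func_monopole} and the dipole coefficient \eqnref{sc_func_dipole}, I would feed the same change of variables into the conclusion of Theorem \ref{thm:point_and_dipole_approx}. Evaluating the scattered field $u^s(\Bx) = u(\Bx) - u^{in}(\Bx)$ for $D'$ at a far point and pulling back by $sR_d$, the term $G(\Bx,k)$ picks up the factor $s$ from $\mathcal{S}_{D'}^{k} = s\,\mathcal{S}_D^{s k}(\cdot\circ sR_d)$, while $u^{in}(0)$ and $\nabla u^{in}(0)$ are evaluated at the (shared) center $0$ and the gradient acquires a factor $s$ and a rotation $R_d$ upon the pullback. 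Matching with the template $u^s = g^0 u^{in}(0) G(\Bx,k) + \nabla u^{in}(0)\cdot g^1 \nabla G(\Bx,k) + O(\delta|x|^{-1})$ then forces $g^0(\omega,\delta,D') = s\, g^0(s\omega,\delta,D)$ and $g^1(\omega,\delta,D') = s^3\, R_d\, g^1(s\omega,\delta,D)\, R_d^{\mathsf T}$ — more precisely, $g^1_{ij}(\omega,\delta,D') = s^3 \big(R_d g^1(s\omega,\delta,D) R_d^{\mathsf T}\big)_{ij}$. Substituting the explicit formulas for $g^0, g^1$ from Theorem \ref{thm:point_and_dipole_approx} and using $\omega_1(\delta,D')^2/\omega^2 = \omega_1(\delta,D)^2/(s\omega)^2$ (and the same for $\omega_2$) yields exactly \eqnref{sc_func_monopole}; for $g^1$, the geometric part $\int_{\partial D}(\mathcal{S}_D^0)^{-1}[x_i]\,y_j$ conjugated by $R_d$ becomes $\int_{\partial D}(\mathcal{S}_D^0)^{-1}[(R_dx)_i](y)(R_dy)_j$ after the factor $s^3$, while the resonant part $\tfrac{\delta v_b^2}{|D|(\omega_2^2 - \omega^2)}P^2\delta_{i1}\delta_{j1}$ transforms with the extra $s$ (one power is absorbed into rescaling $\omega_2$), and the rotation sends $\delta_{i1}\delta_{j1}$, the outer product of the orientation vector of $D$ with itself, to $d_id_j$ — giving \eqnref{sc_func_dipole}.

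The routine bookkeeping — tracking which $s$-powers land where, and confirming $P$ (defined intrinsically for $D$ in the lemma preceding Theorem \ref{thm:point_and_dipole_approx}) is rotation-invariant since it is computed on the fixed reference dimer $D$ — is straightforward. The one genuine point to be careful about is the interplay between the error terms and the rescaling: the $O(\delta|x|^{-1})$ and $O(\delta^{3/2})$ remainders in Theorem \ref{thm:point_and_dipole_approx} were derived for $D$ with $\omega = O(\delta^{1/2})$, and one must check that under $\omega \mapsto s\omega$ with $s = O(1)$ the hypothesis $\omega = O(\delta^{1/2})$ is preserved and the remainder estimates scale consistently (the prefactor $s^{-1}$ from $G$ versus $|x|$ being replaced by $s|x|$), so that the stated errors remain $O(\delta^{1/2})$-type corrections to $g^0$ and $O(\delta|x|^{-1})$ for the field. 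I expect this error-tracking, rather than the algebra, to be the main (though mild) obstacle; everything else is a direct consequence of the dilation/rotation covariance of the single-layer representation \eqnref{helmScal}–\eqnref{phipsi} and of Theorems \ref{sphere_res} and \ref{thm:point_and_dipole_approx}.
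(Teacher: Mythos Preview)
Your proposal is correct and is essentially the same scaling argument as the paper's proof: the paper simply records the elementary scaling identities $C_{ij}(sD)=sC_{ij}(D)$, $P(sD)=s^{2}P(D)$, and $\int_{\partial(sR_dD)}(\mathcal{S}_{sR_dD}^{0})^{-1}[x_i](y)\,y_j = s^{3}\int_{\partial D}(\mathcal{S}_D^{0})^{-1}[(R_dx)_i](y)(R_dy)_j$, and then substitutes into the formulas of Theorems~\ref{sphere_res} and~\ref{thm:point_and_dipole_approx}. Your version packages the same content at the level of the operator $\mathcal{A}_\delta^\omega$ (deducing $\omega_j(\delta,sR_dD)=s^{-1}\omega_j(\delta,D)$ from the conjugacy $\mathcal{A}_\delta^\omega|_{D'}\sim\mathcal{A}_\delta^{s\omega}|_{D}$) rather than via $C_{ij}$, but the underlying computation is identical.
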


\begin{proof} By a scaling argument, one can show that 
$$
C_{ij}(sD) = sC_{ij}(D), \quad P(sD) = s^2 P(D), $$
and $$  \int_{\partial (sR_dD)}  (\mathcal{S}_{sR_dD}^0)^{-1}[x_i](y) y_j = s^3 \int_{\partial D}  (\mathcal{S}_D^0)^{-1}[(R_d x)_i](y) (R_d y)_j .
$$
The proof is then complete. \end{proof}

\begin{remark}
The coefficients $C_{11}, C_{12}$ and $P$ can be explicitly computed using bispherical coordinates. Explicit formulas for $C_{11}$ and $C_{12}$ are given in subsection \ref{subsec:cap}.
Using the same approach as in the derivation of $C_{ij}$ \cite{Lek}, we can also obtain an explicit formula for $P$. It holds that
$$
P=-4\pi r_0 (r_0+d_0/2)-8\pi \alpha^2 \sum_{n=0}^\infty (2n+1)e^{-(2n+1)T} \coth((n+1/2)T).
$$

\end{remark}

\section{Homogenization theory} \label{sec:homo} 

We consider the scattering of an incident acoustic plane wave $u^{in}$ by $N$ identical bubble dimers with different orientations distributed in a homogeneous fluid in $\R^3$.  The $N$ identical bubble dimers are generated by scaling the normalized bubble dimer $D$ by a factor $s$, and then rotating the orientation and translating the center. More precisely,  the bubble dimers occupy the domain
$$
D^N:=\cup_{1\leq j \leq N}D_j^N,
$$ 
where $D_j^N=z_j^N + s R_{d_j^N}D$ for $1\leq j \leq N$, with $z_j^N$ being the center of the dimer $D_j^N$, $s$ being the characteristic size, and $R_{d_j^N}$ being the rotation in $\R^3$ which aligns the dimer $D_j^N$ in the direction $d_j^N$.  Here, $d_j^N$ is a vector of unit length in $\R^3$.

We assume that $0< s \ll 1$, $N\gg 1$ and that $\{z_j^N: 1\leq j \leq N\} \subset \Omega$ where $\Omega$ is a bounded domain.  
Let $u^{in}$ be the incident wave which we assume to be a plane wave for simplicity. The scattering of acoustic waves by the bubble dimers can be modeled by the following system of equations: 
\beq \label{eq-scattering}
\left\{
\begin{array} {ll}
&\nabla \cdot \f{1}{\rho} \nabla  u^N+ \frac{\omega^2}{\kappa} u^N  = 0 \quad \mbox{in } \R^3 \backslash D^N, \\
&\nabla \cdot \f{1}{\rho_b} \nabla  u^N+ \frac{\omega^2}{\kappa_b} u^N  = 0 \quad \mbox{in } D^N, \\
& u^N_{+} -u^N_{-}  =0   \quad \mbox{on } \partial D^N, \\
&  \f{1}{\rho} \f{\p u^N}{\p \nu} \bigg|_{+} - \f{1}{\rho_b} \f{\p u^N}{\p \nu} \bigg|_{-} =0 \quad \mbox{on } \partial D^N,\\
&  u^N- u^{in}  \,\,\,  \mbox{satisfies the Sommerfeld radiation condition}, 
  \end{array}
 \right.
\eeq
where $u^N$ is the total field and $\omega$ is the frequency.
Then the solution $u^N$ can be written as 
\beq \label{Helm-solution}
u^N(x) = \left\{
\begin{array}{lr}
u^{in} + \mathcal{S}_{D^N}^{k} [\psi^N], & \quad x \in \R^3 \backslash \overline{D^N},\\
\nm
\mathcal{S}_{D}^{k_b} [\psi_b^N] ,  & \quad x \in {D^N},
\end{array}\right.
\eeq
for some surface potentials $\psi, \psi_b \in  L^2(\p D^N)$. Here, we have used the notations
\begin{align*}
L^2(\p D^N) &= L^2(\p D^N_1) \times L^2(\p D^N_2) \times \cdots \times L^2(\p D^N_N), \\ 
\mathcal{S}_{D^N}^{k} [\psi^N]&= \sum_{1\leq j \leq N} \mathcal{S}_{D^N_j}^{k} [\psi_j^N], \\
\mathcal{S}_{D}^{k_b} [\psi_b^N]&=  \sum_{1\leq j \leq N} \mathcal{S}_{D^N_j}^{k} [\psi_{bj}^N].
\end{align*}

Using the jump relations for the single layer potentials, it is easy to derive that $\psi$ and $\psi_b$ satisfy the following system of boundary integral equations:
\beq  \label{eq-boundary}
\mathcal{A}^N(\omega, \delta)[\Psi^N] =F^N,  
\eeq
where
\[
\mathcal{A}^N(\omega, \delta) = 
 \begin{pmatrix}
  \mathcal{S}_{D^N}^{k_b} &  -\mathcal{S}_{D^N}^{k}  \\
  -\f{1}{2}Id+ \mathcal{K}_{D^N}^{k_b, *}& -\delta( \f{1}{2}Id+ \mathcal{K}_{D^N}^{k, *})
\end{pmatrix}, 
\,\, \Psi^N= 
\begin{pmatrix}
\psi_b^N\\
\psi^N
\end{pmatrix}, 
\,\,F^N= 
\begin{pmatrix}
u^{in}\\
\delta \f{\partial u^{in}}{\partial \nu}
\end{pmatrix}|_{\p D^N}.
\]

One can show that the scattering problem (\ref{eq-scattering}) is equivalent to the system of boundary integral equations (\ref{eq-boundary}) \cite{akl, book2}. Furthermore, it is well-known that there exists a unique solution to the scattering problem (\ref{eq-scattering}), or equivalently to the system (\ref{eq-boundary}). 
%


We are concerned with the case when there is a large number of small identical bubble dimers distributed in a bounded domain and the frequency of the incident wave is close to the hybridized Minnaert resonances for a single bubble dimer. 

We recall that for a bubble dimer $z+ sR_dD$, there exist two quasi-static resonances which are given by
\begin{align*}
\omega_1(\delta, z+ sR_dD) &= \frac{1}{s} \omega_1(\delta, D), \\
\omega_2(\delta, z+ sR_dD) &= \frac{1}{s} \omega_2(\delta, D).
\end{align*}
We are interested in the limit when the size $s$ tends to zero while the frequency is of order one. In order to fix the order of the resonant frequency, we make the following assumption.

\begin{asump} \label{asump-1}
$\delta = \mu^2 s^2$ for some positive number $\mu >0$. 
\end{asump}

As a result, the two hybridized resonances have the following asymptotic expansions:
\begin{align*}
\omega_1(\delta, D^N_j) &= \omega_{M,1}- i \tau_1 \mu^2 s +O(s^2), \\
\omega_2(\delta, D^N_j) &= \omega_{M,2}+ \mu^3 \hat \eta_1 s^2 - i \mu^4 \hat \eta_2 s^3 + O(s^4),
\end{align*}
where 
$$
\omega_{M,1} = v_b \mu \sqrt{ (C_{11}+ C_{12}) }, \quad
\omega_{M,2} = v_b \mu \sqrt{(C_{11}- C_{12}) }.
$$ 
Moreover, the monopole and dipole coefficients are given by
\begin{align}
&g^0(\omega, \delta, D^N_k):=\frac{2s(C_{11}+C_{12})}{1- \omega_1(\delta, D^N_k)^2/\omega^2}(1+O(\delta^{1/2})),\\
&g^1(\omega, \delta, D^N_k)=(g^1_{ij}(\omega, \delta, D^N_k)),
\end{align}
where
$$
g^1_{ij}(\omega, \delta, D^N_k):= s^3\int_{\partial D}  (\mathcal{S}_D^0)^{-1}[(R_{d_k^N} x)_i](y) (R_{d_k^N} y)_j  + \frac{\mu^2  v_b^2 s^3}{2(\omega_2(\delta, D^N_k)^2-\omega^2)} P^2(d_k^N)_i(d_k^N)_j.
$$

\begin{asump}\label{asump-2}
$\omega = \omega_{M,2} + as^2$ for some real number $a \neq \mu^3 \hat \eta_1 $. 
\end{asump}

Then 
\begin{align}
&g^0(\omega, \delta, D^N_k):=\frac{2s(C_{11}+C_{12})}{1- \omega_{M,1}^2/\omega_{M, 2}^2}(1+O(s)),\\
&g^1_{ij}(\omega, \delta, D^N_k):= \frac{\mu^2  v_b^2 s}{2|D|\omega_{M,2} \left((\mu^3 \hat \eta_1 -a) - i \mu^4 \hat \eta_2 s \right)} P^2(d_k^N)_i(d_k^N)_j + O(s^3).
\end{align}

We introduce the two constants  
$$
\tilde g^0 = \frac{2(C_{11}+C_{12})}{1- \omega_{M,1}^2/\omega_{M, 2}^2}, \quad \tilde g^1 = 
\frac{\mu^2  v_b^2 }{2|D|\omega_{M,2} (\mu^3 \hat \eta_1 -a)} P^2.
$$

We now impose conditions on the distribution of the bubble dimers. 
\begin{asump} \label{asump-3}
$sN = \Lambda $ for some positive number $\Lambda >0$. 
\end{asump}
Note that the volume fraction of the bubble dimers is of the order of $s^3N$. The above assumption implies that the bubble dimers are very dilute with the average distance between neighboring dimers being of the order of $\frac{1}{N^{1/3}}$.

\begin{asump} \label{asump-4} The bubble dimers are regularly distributed in the sense that
 \[
\min_{i \neq j } |z^N_i -z^N_j|  \geq  \eta N^{-\f{1}{3}},
\]
for some constant $\eta$ independent of $N$.  Here, $\eta N^{-\f{1}{3}}$ can be viewed as the minimum separation distance between neighbouring bubble dimers.
\end{asump}

In addition, we also make the following assumptions on the regularity of the distribution $\{z_j^N: 1\leq j \leq N\}$ and the orientation $\{d_j^N: 1\leq j \leq N\}$.

\begin{asump} \label{asump5-1}
There exists a function $\tilde V \in C^1(\bar{\Omega})$ such that for any $f \in C^{0, \alpha}(\Omega)$ with $0 < \alpha \leq 1$,
\beq 
\max_{1\leq j \leq N} | \f{1}{N} \sum_{i \neq j} G(z_j^N - z_i^N,k) f(z_i^N) -
\int_{\Omega} G(z_j^N - y,k) \tilde{V}(y) f(y) dy| \leq C \f{1}{N^{\f{\alpha}{3}}}\|f\|_{C^{0, \alpha}(\Omega)}  
\eeq
for some constant $C$ independent of $N$. 
\end{asump}

\begin{asump} \label{asump5-2}
There exists a matrix valued function $\tilde B \in C^1(\bar{\Omega})$ such that
for $f \in (C^{0, \alpha}(\Omega))^3$ with $0 < \alpha \leq 1$, 
\beq 
\max_{1\leq j \leq N} |\f{1}{N} \sum_{i\neq j} (f(z_i^N )\cdot d_i^N)( d_i^N \cdot \nabla G(z_i^N - z_j^N,k)) -
\int_{\Omega} f(y) \tilde B \nabla G(y-z_j^N,k)  dy| \leq C \f{1}{N^{\f{\alpha}{3}}}\|f\|_{C^{0, \alpha}(\Omega)}  
\eeq
for some constant $C$ independent of $N$. 
\end{asump}

%

\begin{remark}
If we let $\{z_j^N: 1\leq j \leq N\}$ be uniformly distributed, then $\tilde V$ is a positive constant in $\Omega$. We can also let the orientation be uniformly distributed in the unit sphere in the sense that the average of the matrix $d^N_j(d^N_j)^T$ in any neighborhood of any point in $\Omega$ tends to a multiple of the identity matrix as $N$ tends to infinity. In that case, $\tilde B$ is a positive constant multiple of the identity matrix at each point. 
\end{remark}

\subsection{The homogenized equations}

In the same spirit as the point interaction approximation \cite{hamdache, Ammari_Hai,figari,ozawa, papanicolaou}, we now formally derive the homogenized equation. 

For $1\leq j \leq N$, denote by  
\begin{align} \label{eq-system1}
u^{i,N}_j&= u^{in} + \sum_{i \neq j} \mathcal{S}_{D_i^N}^k [\psi_i^N], \\
u^{s,N}_j &= \mathcal{S}_{D_j^N}^k [\psi_j^N].
\end{align}
It is clear that $u^{i,N}_j$ is the total incident field which impinges on the bubble $D_j^N$, and $u^{s,N}_j$ is the corresponding scattered field.  
Denote by
$$
\Omega_N = \Omega \backslash \cup_{1\leq j \leq N} B(z_j^N, \sqrt{s}).
$$
Note that the volume fraction of the set $\cup_{1\leq j \leq N} B(z_j^N, \sqrt{s})$ is of the order 
$$
O(N\cdot s^{\frac{3}{2}})= O(N\cdot s) \cdot s^{\frac{1}{2}}, 
$$
which tends to zero as $N \to \infty$ under Assumption \ref{asump-3}. 
We have
$$
u^N(x) = u^{in} + \sum_{1\leq k \leq N} u^{s, N}_k(x)=u^{i, N}_j(x) + u^{s, N}_j(x), \mbox{  for each } 1\leq j \leq N \,\,\,\, \mbox{and}\,\, x\in \Omega_{N}. 
$$

\begin{prop}  \label{lem-pointapprox1}
Under Assumptions \ref{asump-1}, \ref{asump-2}, \ref{asump-3}, we have that, for $x\in \Omega_N$,
$$
u^{s,N}_j(x) \approx  g^0(\omega, \delta, D^N_j)u^{i, N}_j(z_j^N) G (\Bx-z_j^N,k) +\nabla u^{i, N}_j(z_j^N)\cdot g^1(\omega, \delta, D^N_j) \nabla G (\Bx-z_j^N,k). 
$$
\end{prop}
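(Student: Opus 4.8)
The plan is to treat each bubble dimer $D_j^N$ via the single-dimer point-scatterer approximation of Theorem \ref{thm:point_and_dipole_approx}, but with the \emph{effective incident field} $u^{i,N}_j$ replacing the plane wave $u^{in}$. First I would isolate the equations governing $(\psi_{bj}^N,\psi_j^N)$ on $\partial D_j^N$ by moving all the interaction terms $\sum_{i\neq j}\mathcal{S}_{D_i^N}^k[\psi_i^N]$ to the right-hand side. Because the dimers are separated by a distance $\geq \eta N^{-1/3}$ (Assumption \ref{asump-4}) while each dimer has size $s$ and $sN=\Lambda$ (Assumption \ref{asump-3}), the field $\sum_{i\neq j}\mathcal{S}_{D_i^N}^k[\psi_i^N]$ is smooth near $D_j^N$ and can be Taylor-expanded about the center $z_j^N$; its value and gradient there are $u^{i,N}_j(z_j^N)$ and $\nabla u^{i,N}_j(z_j^N)$ up to controlled errors. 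This reduces the local problem on $D_j^N$ to exactly the form \eqnref{HP1} analyzed in Section \ref{sec:approx}, with incident data $u^{i,N}_j$ in place of $u^{in}$.

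Next I would apply the rescaling corollary (the one giving \eqnref{sc_func_monopole}--\eqnref{sc_func_dipole}) to translate the normalized-dimer formula of Theorem \ref{thm:point_and_dipole_approx} to $D_j^N = z_j^N + sR_{d_j^N}D$. Under Assumption \ref{asump-1} ($\delta=\mu^2 s^2$), the resonances $\omega_1(\delta,D_j^N)$, $\omega_2(\delta,D_j^N)$ have the stated expansions around $\omega_{M,1}$, $\omega_{M,2}$, and under Assumption \ref{asump-2} ($\omega=\omega_{M,2}+as^2$) the monopole coefficient $g^0$ is bounded (the denominator $1-\omega_{M,1}^2/\omega_{M,2}^2$ is nonzero since $C_{12}\neq 0$) while the dipole coefficient $g^1$ is resonantly enhanced, scaling like $s$ rather than $s^3$. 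Substituting the Taylor expansion of the external field into Theorem \ref{thm:point_and_dipole_approx}'s formula then yields precisely the claimed representation
$$
u^{s,N}_j(x)\approx g^0(\omega,\delta,D_j^N)\,u^{i,N}_j(z_j^N)\,G(\Bx-z_j^N,k)+\nabla u^{i,N}_j(z_j^N)\cdot g^1(\omega,\delta,D_j^N)\,\nabla G(\Bx-z_j^N,k),
$$
valid for $x\in\Omega_N$, i.e.\ at distance $\geq\sqrt{s}$ from every center, which is where the far-field expansion $G(\Bx-\By,k)=G(\Bx-z_j^N,k)-\nabla G(\Bx-z_j^N,k)\cdot(\By-z_j^N)+O(\cdots)$ used in Theorem \ref{thm:point_and_dipole_approx} is legitimate.

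The main obstacle is the bookkeeping of the error terms: one must verify that replacing the true external field by its value and gradient at $z_j^N$, and replacing $D_j^N$'s far field by its monopole-plus-dipole truncation, introduces errors that are genuinely lower order \emph{uniformly in $j$} and after summing over all $N$ dimers. This is where Assumptions \ref{asump-3} and \ref{asump-4} enter quantitatively: the separation bound controls the Lipschitz constant of the external field on $D_j^N$ by $O(sN^{1/3})=O(\Lambda N^{-2/3}\cdot N)$-type estimates, and one needs a uniform bound on the densities $\psi_j^N$ (equivalently on the $u^{i,N}_j(z_j^N)$), which at this stage is taken as part of the formal derivation — the rigorous version would require the \emph{a priori} estimates established elsewhere for the full linear system \eqnref{eq-boundary}. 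Since the proposition is stated with ``$\approx$'' and framed as a formal point-interaction-type derivation, I would present the argument at that level: localize, Taylor-expand the interaction term, invoke Theorem \ref{thm:point_and_dipole_approx} together with the rescaling corollary and Assumptions \ref{asump-1}--\ref{asump-2}, and remark that each approximation step is consistent to the order retained.
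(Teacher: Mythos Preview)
The paper does not supply a proof of this proposition: it is stated without argument, immediately after the definitions of $u^{i,N}_j$ and $u^{s,N}_j$, as part of what the paper explicitly calls a \emph{formal} derivation (see the opening sentence of the subsection and the remark following \eqnref{eq-pde2}, which leaves rigorous justification open). Your plan---isolate the boundary-integral equations on $\partial D_j^N$, push the interaction terms $\sum_{i\neq j}\mathcal{S}_{D_i^N}^k[\psi_i^N]$ into the incident data, and then invoke Theorem~\ref{thm:point_and_dipole_approx} together with the rescaling corollary---is precisely the argument the authors have in mind, and your remark that the uniform error bookkeeping is the real obstacle matches the paper's own stance.

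Two small points. First, you invoke Assumption~\ref{asump-4} in your main argument, but the proposition as stated lists only Assumptions~\ref{asump-1}--\ref{asump-3}; at the formal level the separation bound is not strictly needed, though it would of course enter any rigorous version. Second, Theorem~\ref{thm:point_and_dipole_approx} is proved for a plane wave and its Step~2 uses $\nabla u^{in}=O(\omega)$, $\nabla^2 u^{in}=O(\omega^2)$ explicitly; for the effective incident field $u^{i,N}_j$ this scaling is not automatic and depends on a priori control of the densities $\psi_i^N$, which you correctly identify as the outstanding ingredient.
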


We further assume that
\begin{asump} \label{asum}
There exists some macroscopic field $u \in C^{1, \alpha}(\Omega)$ such that
$$
u^N (x) \to u(x) \,\,\, \mbox{for}\,\, x\in \Omega_{N}.
$$
Here the convergence is understood in the sense that for any $\epsilon >0$, there exists $N_0$ such that for all $N\geq N_0$, we have
$$
\|u^N (x) - u(x)\|_{C^{1, \alpha}(\Omega_{N})} \leq \epsilon.
$$
\end{asump}

The above assumption implies in particular that 
$
u^{i, N}_j(z_j^N) \to u(z_j^N).
$
Therefore

\begin{align*}
& g^0(\omega, \delta, D^N_j)u^{i, N}_j(z_j^N) G (\Bx-z_j^N,k) \to 
\frac{1}{N} \Lambda u(z_j^N) \tilde g^0 G(x- z_j^N,k), \\
& \nabla u^{i, N}_j(z_j^N)\cdot g^1(\omega, \delta, D^N_j, z_j^N) \nabla G (\Bx-z_j^N,k) \to 
\frac{1}{N} \Lambda \tilde g^1 (\nabla u(z_j^N)\cdot  d_j^N) (d_j^N \cdot \nabla G (\Bx-z_j^N,k)). 
\end{align*}

On the other hand, we have 
$$
u^N(x) \approx u^{in}+ \sum_{1\leq j \leq N} g^0(\omega, \delta, D^N_j)u^{i, N}_j(z_j^N) G (\Bx-z_j^N,k) +\sum_{1\leq j \leq N}\nabla u^{i, N}_j(z_j^N)\cdot g^1(\omega, \delta, D^N_j) \nabla G (\Bx-z_j^N,k). 
$$
By letting $N \to \infty$, we obtain
\begin{align} \label{eq-integral}
u(x) = u^{in} + \int_{\Omega} \Lambda \tilde g^0 u(y)\tilde V(y)G(x-y,k)dy
+ \int_{\Omega} \Lambda \tilde g^1 \nabla u (y)\tilde B \nabla G(x-y,k)dy. 
\end{align}

By applying the operator $\triangle + k^2$ to both side of the above equation, we get
$$
(\triangle + k^2)u (x) = \Lambda \tilde g^0 \tilde V u(x)  + \nabla \cdot \left(\Lambda  \tilde g^1 \tilde B \nabla u\right)(x), \quad \mbox{in } \Omega.
$$
Or equivalently,
\beq \label{eq-pde}
\nabla \cdot \left(I- \Lambda \tilde g^1 \tilde B \right)\nabla u(x) + (k^2- \Lambda\tilde g^0 \tilde V)u(x) =0, \quad \mbox{in } \Omega. 
\eeq
Therefore, we have shown that the microscopic field $u^N$ tends to a macroscopic field $u$ which satisfies the following effective equation
\beq \label{eq-pde2}
\nabla \cdot M_1(x) \nabla u(x) + M_2(x)u(x) =0, \quad \mbox{in } \mathbb{R}^3,
\eeq
where 
$$
M_1 = \begin{cases}
I, &\quad \mbox{in } \mathbb{R}^3\setminus \Omega,
 \\
I- \Lambda \tilde g^1 \tilde B, &\quad \mbox{in }   \Omega,
 \end{cases}
 $$
 and
 $$
 M_2 = \begin{cases}
 k^2, &\quad \mbox{in }  \mathbb{R}^3\setminus \Omega,
 \\
 k^2- \Lambda\tilde g^0 \tilde V, &\quad \mbox{in } \Omega.
 \end{cases}
$$

\begin{remark}
We have derived the effective media theory in a formal way under the crucial Assumption \ref{asum}. We leave a rigorous justification as an open problem for future investigation. We note that a justification is provided in 
 \cite{Ammari_Hai} for a related problem using the point interaction approximation. However the point scatterers therein only have monopole mode which is different from the case considered here where the point scatterers have monopole mode and dipole mode. We expect that new techniques are required 
for the justification. 
\end{remark}

\subsection{Double-negative refractive index media}

If the bubble dimers are distributed such that $\tilde B(x)$ is symmetric and positive definite with
$\tilde B(x) \geq C >0$ for some constant C for all $x \in \Omega$, then we see that for $\omega$ in the form
$\omega = \omega_{M,2} + as^2$ with $a < \mu^3 \hat \eta_1$, and sufficiently large $\Lambda$, both the matrix $Id- \Lambda \tilde g^1 \tilde B$ and the scalar function
$k^2- \Lambda\tilde g^0 \tilde V$ are negative. Therefore, we obtain an effective double-negative media with both negative mass density and negative bulk modulus. On the other hand, for $\omega$ in between $\omega_{M, 1}$ and $\omega_{M, 2}$ but away from the anti-resonance $\omega_{M, 2}$, $\tilde {g}^1$ may be small enough such that the matrix $Id- \Lambda \tilde g^1 \tilde B$ is positive, while the matrix $k^2- \Lambda\tilde g^0 \tilde V$ remains negative. Then the obtained effective media has negative mass density and positive bulk modulus.

\section{Numerical illustrations} \label{sec:num} 
In this section, we illustrate the double-negative refractive index phenomenon in bubbly media by numerical examples.

We consider a cubic array of identical spherical bubble dimers. Suppose $\Omega$ is a cube with a side length of $L=20$, {\it i.e.}, $\Omega=[0,20]^3$. 
Let $a=0.2$ and define a small cube $\Omega_a=[0,a]^3$. 
Then $\Omega$ can be considered as a union of small cubes as follows:
 $$
 \displaystyle\Omega=\bigcup_{n_1,n_2,n_3=0,1,...,99} \Omega_a + a(n_1,n_2,n_3).
 $$
We assume that a bubble dimer is centered in each of the small cubes. 
Then the total number of dimers is $N=10^6$ and the periodicity of the dimer array is $a=0.2$.


Recall that a bubble dimer is described by $z+s R_d D$, where $z$ is the center of the dimer, $s$ is its characteristic size, and $R_d$ is a rotation in $\RR^3$ which aligns the dimer in the direction $d$, where $d$ is a unit vector. We set the characteristic size of the dimers to be $s=0.1$. Since $D$ has unit volume, the radius $r_0$ of the bubbles comprising the dimers is $r_0=s(3/4\pi)^{1/3}\approx 0.005$. 

We set $\tilde{\rho}=\tilde{\kappa}=1$ and $\rho=\kappa=5\times 10^3$. Then $v=\tilde{v}=1$, $k=\tilde{k}=\omega$, and $\delta = 2\times 10^{-4}$. We assume that the two bubbles comprising each dimer are separated by a distance of $l = 5r_0\approx 0.0248$. Moreover, each dimer is randomly oriented so that the unit vector $d$ is uniformly distributed on the unit sphere. Under these assumptions, we can easily check that
$
\Lambda = 8\times 10^3$, $\tilde{V}\approx |\Omega|^{-1}=1.25\times 10^{-4} $, and $ \tilde{B}\approx (2|\Omega |)^{-1}I=6.25\times 10^{-5}I.
$

Now we consider the effective properties of the homogenized media. Recall that the effective coefficients of the homogenized equation \eqnref{eq-pde} are $k^2-\Lambda \tilde{g}^0 \tilde{V}$ and $I-\Lambda \tilde{g}^1 \tilde{B}$.
Using the above parameters, we have
\begin{align}
&k^2-\Lambda \tilde{g}^0 \tilde{V} \approx \omega^2-\tilde{g}^0 =\omega^2(1- \tilde{g}^0/\omega^2),\nonumber
\\
& I-\Lambda \tilde{g}^1 \tilde{B} \approx (1-\tilde{g}^1/2) I. 
\label{approx_eff_coeff}
\end{align}
Note that the coefficient $I-\Lambda \tilde{g}^1 \tilde{B}$ can be roughly considered as a scalar quantity. The scattering functions $\tilde{g}_0$ and $\tilde{g}_1$ can be computed as follows. Since $\tilde{g}_0\approx s g_0$ and $\tilde{g}_1\approx s g_1$, we have from
 \eqnref{sc_func_monopole} and \eqnref{sc_func_dipole} that
\begin{align*}
\tilde{g}^0(\omega, \delta, sR_dD) &\approx \frac{2(C_{11}+C_{12})}{1- \omega_1^2/\omega^2}, \\
\tilde{g}^1(\omega, \delta, sR_dD)&\approx  \frac{\delta \tilde v^2}{2(\omega_2^2-\omega^2)} P^2d_id_j.
\end{align*}

The resonance frequencies $\omega_1$ and $\omega_2$ can be easily calculated using a standard multipole expansion together with a root finding method such as Muller's method \cite{akl}. We find that $\omega_1 \approx 4.6171-0.0926i$ and $\omega_2 \approx 5.3253-0.0005 i$. Then it is simple to compute $\tilde{g}^0$ and $\tilde{g}^1$.

In the left of Figure \ref{fig:eff}, we plot the two effective coefficients as functions of frequency. Clearly, there is a narrow region contained in the interval $[5.2,5.4]$ in which both of the coefficients are negative. So we expect that the negative refraction occurs in this frequency region.

We next consider the refractive index.
In view of \eqnref{eq-pde} and \eqnref{approx_eff_coeff}, the effective mass density $\rho_{eff}$ and the effective bulk modulus $\kappa_{eff}$ can be computed approximately by
$$
\rho_{eff} \approx 1- \tilde{g}^1/2, \quad \kappa_{eff} \approx (1- \tilde{g}^0/\omega^2)^{-1}.
$$
As usual, we define the refractive index $n_{eff}$ by
$$
n_{eff}= {\sqrt \rho_{eff}} {\sqrt{\kappa_{eff}^{-1}}}.
$$
In the right figure of Figure \ref{fig:eff}, we plot the refractive index as a function of frequency. It is clear that the refractive index becomes negative in a narrow region contained in the interval $[5.2,5.4]$, as expected.

\begin{figure*}
\begin{center}
\epsfig{figure=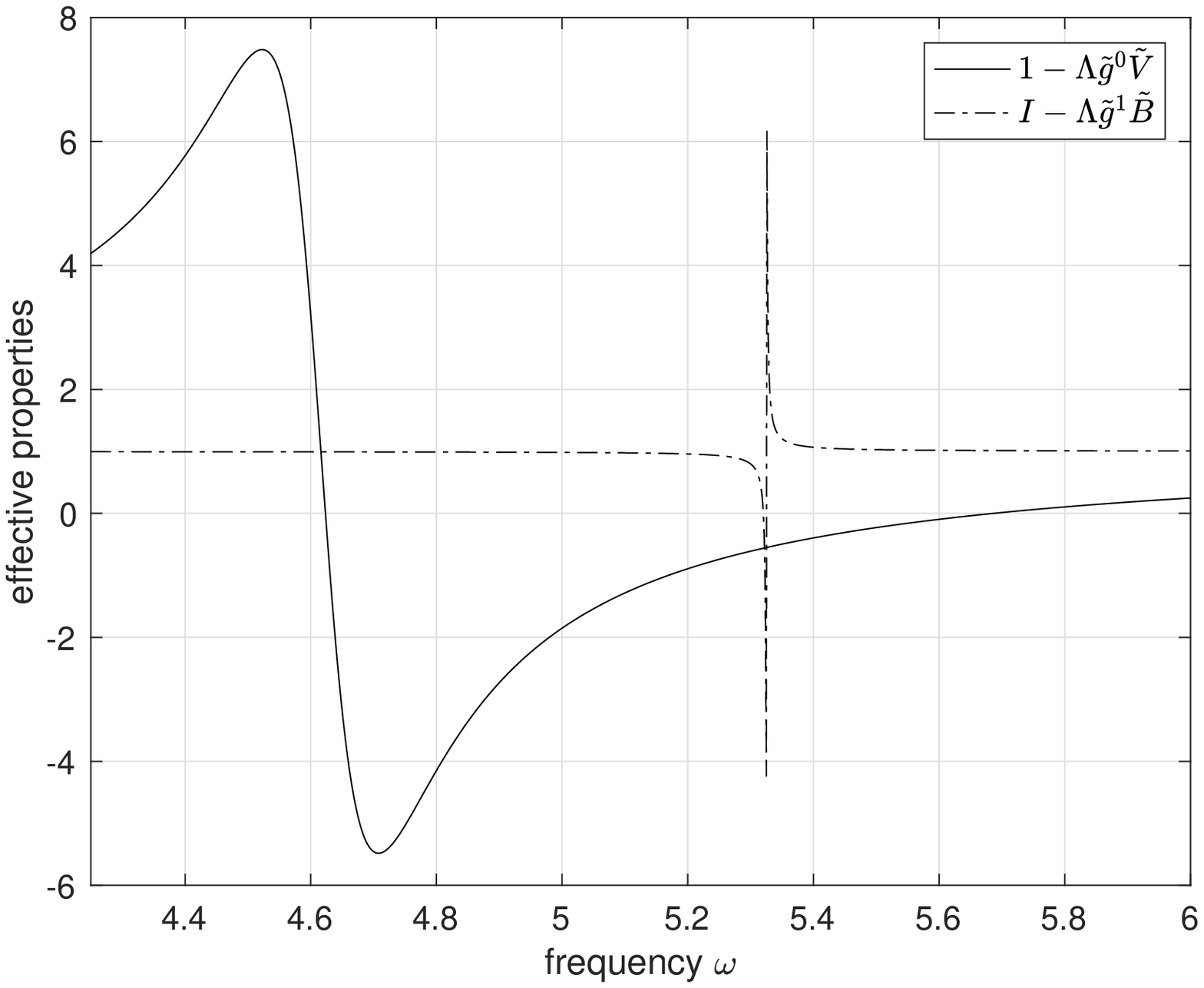,width=7cm}
\epsfig{figure=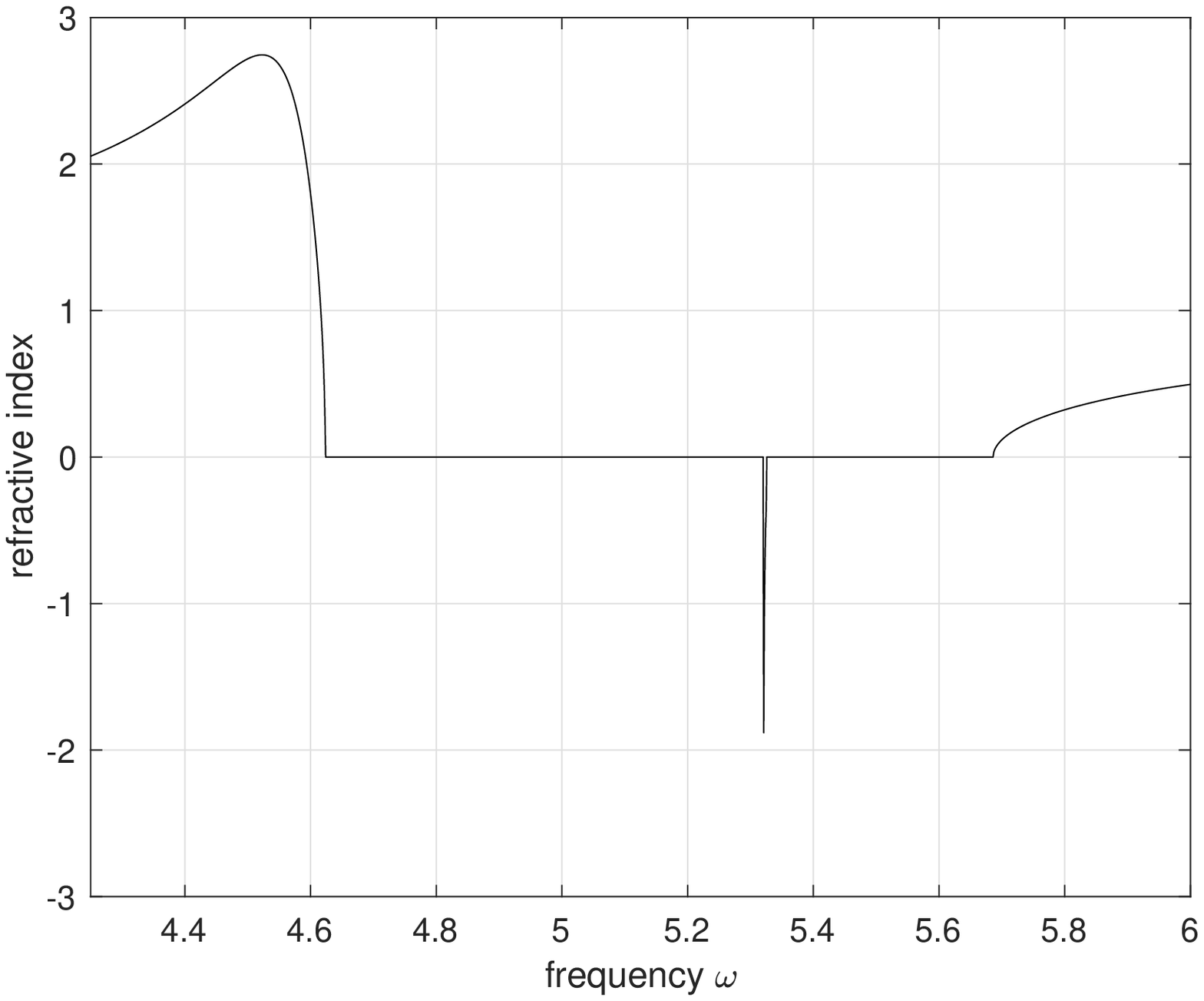,width=7cm}
\end{center}
\caption{The effective properties of the homogenized media (left), and the refractive index (right).}
\label{fig:eff}
\end{figure*}

%

\end{document}